\documentclass[12pt,reqno]{amsart} 

\usepackage{amsmath,amssymb,amsthm,amsfonts}
\usepackage{color}
\usepackage{graphicx,subcaption}
\usepackage{hyperref}
\usepackage{epstopdf}
\usepackage{bm}
\usepackage{enumerate}
\usepackage{amsthm}
\usepackage[margin=4cm]{geometry}

	\makeatletter
	\def\blfootnote{\gdef\@thefnmark{}\@footnotetext}
	\makeatother

\usepackage{todonotes}

\theoremstyle{plain} 
\newtheorem{thm}{Theorem}[section]
\newtheorem{lem}[thm]{Lemma}
\newtheorem{prop}[thm]{Proposition}

\newtheorem{conj}[thm]{Conjecture}

\theoremstyle{definition}
\newtheorem{defn}[thm]{Definition}
\newtheorem{ques}[thm]{Question}

\theoremstyle{remark}
\newtheorem{rem}[thm]{Remark}

\numberwithin{equation}{section}

\def\RR{\mathbb{R}}

\newcommand{\relper}{\mathrm{Rel}\;\mathrm{Per}}
\newcommand{\firstvar}{\mathrm{First}\;\mathrm{Var}}
\DeclareMathOperator{\vol}{Vol}
\DeclareMathOperator{\area}{Area}

\begin{document}

\title{On the relative isoperimetric problem for the cube}

\author{Gregory R. Chambers and Lawrence Mouill\'{e}}
\address{Department of Mathematics, Rice University, Houston, TX, 77005}
\email{gchambers@rice.edu}
\address{Department of Mathematics, Trinity University, San Antonio, TX, 78212}
\email{lmouille@trinity.edu}
\date{\today}

\begin{abstract}
    In this article, we solve the relative isoperimetric problem in $[0,1]^3$ for orthogonal polyhedra.  
    Up to isometries of the cube or sets of measure $0$,
    the minimizers are of the form $[0,\epsilon]^3$, $[0,\epsilon]^2 \times [0,1]$, or $[0,\epsilon] \times [0,1]^2$ for some $\epsilon > 0$.
    
    This should be compared to the conjectured minimizers for the unconstrained relative isoperimetric problem in $[0,1]^3$, which are (up to
    isometries and sets of measure $0$) of the form $\left( B^3(\epsilon) \right) \cap [0,1]^3$, $\left( B^2(\epsilon) \times [0,1] \right) \cap [0,1]^3$,
    or $[0,\epsilon] \times [0,1]^2$ for some $\epsilon > 0$.  
    Here, $B^k(\epsilon)$ is the closed ball in $\mathbb{R}^k$ of radius $\epsilon$
    centered at the origin.
\end{abstract}

\maketitle

\blfootnote{The first author was supported by NSF Grant DMS-1906543, and the second author was supported by NSF Award DMS-2202826.}

\section{Introduction}
\label{sec:intro}

Let $[0,1]^n$ be the unit cube in $\mathbb{R}^n$ with the Euclidean metric.  
If $U \subset [0,1]^n$ is a measurable set of locally finite perimeter, let $\vol(U)$ be the $n$-dimensional Lebesgue measure of $U$, and let $\textrm{Per}(U)$ be the perimeter of $U$ (i.e. the $(n-1)$-dimensional Hausdorff measure of the essential boundary of $U$).  
We define the relative perimeter of $U$, denoted by $\relper(U)$, as the $(n-1)$-dimensional Hausdorff measure of the essential boundary of $U$ \emph{except} for the portion of that boundary which lies on $\partial [0,1]^n$.  
The relative isoperimetric problem concerns the following question:

\begin{ques}
    \label{ques:isoperimetric}
    Fixing $V \in [0,\frac{1}{2}]$, what is
    \[
        I(V) = \inf \{\relper(U) : U \text{ is as above}, \vol(U) = V \},
    \]
    and what are the minimizers?
\end{ques}

We note that one could consider Question~\ref*{ques:isoperimetric} for $V \in [0,1]$, but the minimizers for $V\in[\frac{1}{2},1]$ are simply the complements in $[0,1]^n$ of the minimizers for $V\in[0,\frac{1}{2}]$.
The function $I:[0,1] \to \mathbb{R}$ is called the \textit{isoperimetric profile} of $[0,1]^n$.
The answer to Question~\ref*{ques:isoperimetric} is conjectured the be the following:

\begin{conj}
    \label{conj:isoperimetric}
    The minimizers for Question~\ref*{ques:isoperimetric}, up to isometries of $[0,1]^n$ and sets of measure $0$, are of the form $(B^m(r) \cap [0,1]^m) \times [0,1]^{n-m}$ for some $r \geq 0$ and $m \in \{1,\dots,n\}$.
    Here, $B^m(r)$ is the closed ball of radius $r$ in $\mathbb{R}^m$ centered at the origin.  
\end{conj}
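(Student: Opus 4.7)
My plan is to attack the conjecture through the standard variational framework for relative isoperimetric problems, followed by a classification of free-boundary constant-mean-curvature (CMC) hypersurfaces in the cube. First, I would use the direct method in $BV$ to obtain existence of a minimizer $U_* \subset [0,1]^n$ for each volume $V$: the class of sets of finite perimeter with $\vol(U) = V$ is sequentially precompact in $L^1$, and $\relper$ is lower-semicontinuous under $L^1$ convergence, so a minimizing sequence subconverges. Second, I would invoke the free-boundary regularity theory (in the spirit of Gr\"uter--Jost and De Philippis--Maggi) to conclude that the essential boundary of $U_*$ is, inside $(0,1)^n$, a smooth hypersurface $\Sigma$ of constant mean curvature, meeting each face of $\partial [0,1]^n$ orthogonally along a smooth free-boundary portion, with singularities confined to a closed set of Hausdorff codimension at least $7$.

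The core geometric step is to classify all such $\Sigma$. The natural approach is to reflect $[0,1]^n$ across each of its $2n$ faces and tile $\RR^n$ by the resulting cubes; the orthogonality condition at regular free-boundary points implies that $\Sigma$ extends smoothly across each face to a complete CMC hypersurface $\widetilde{\Sigma} \subset \RR^n$ invariant under the crystallographic reflection group $W$ generated by the face reflections. The plan is then to combine this symmetry with an Alexandrov-type moving-plane argument, or, in the embedded finite-topology case, a Ros-type rigidity statement, to force each component of $\widetilde{\Sigma}$ to be a round sphere, a generalized cylinder $S^{k-1}(r) \times \RR^{n-k}$ with axis along a coordinate subspace, or a hyperplane parallel to a coordinate hyperplane. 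Cutting back to the fundamental domain $[0,1]^n$, these three possibilities recover exactly the conjectured candidates $(B^m(r) \cap [0,1]^m) \times [0,1]^{n-m}$.

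With the family of candidates in hand, the final step is a direct computation: for each $m \in \{1,\dots,n\}$ and each admissible $r$, evaluate $\vol$ and $\relper$ of the candidate, eliminate $r$, and compare the resulting profiles $I_m(V)$. One expects a monotone regime structure in which small $V$ is minimized by the corner ball ($m = n$), the intermediate regime by successively lower-dimensional cylinders, and $V$ near $\tfrac{1}{2}$ by the slab ($m = 1$), with explicit critical volumes at which the minimizer changes type.

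The decisive obstacle, and the reason the conjecture remains open in general, lies in the classification step of the second paragraph. Alexandrov's theorem rules out closed embedded CMC hypersurfaces in $\RR^n$ other than round spheres, but $\widetilde{\Sigma}$ is \emph{noncompact and $W$-periodic}, so exotic configurations in the spirit of Delaunay unduloids, nodoids, or Lawson-type surfaces are a priori compatible with both the symmetry and the CMC condition. Ruling these out, or at least showing that any such configuration carries strictly larger relative perimeter per unit volume than the three listed families, is the essential difficulty. Indeed, the present paper's restriction to orthogonal polyhedra in $n = 3$ can be viewed as by-passing precisely this step by constraining the competitors to be axis-aligned, turning the geometric classification into a combinatorial one.
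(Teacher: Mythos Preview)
The statement you are attempting to prove is \emph{Conjecture~1.2}, and the paper does not prove it. On the contrary, the paper states explicitly that the conjecture is open in all dimensions $n \geq 3$, and the entire point of the paper is to prove an orthogonal-polyhedra analogue (Theorem~1.7) precisely because the smooth conjecture is out of reach. So there is no ``paper's own proof'' to compare against; the only thing the paper offers is the $n=2$ sketch in Remark~1.3.

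Your outline is the standard variational program, and you correctly identify the decisive gap yourself: the classification of $W$-periodic free-boundary CMC hypersurfaces in the cube. One point you should be aware of is that the paper sharpens exactly this obstruction. Citing Ritor\'e, it records that in $[0,1]^3$ there exist free-boundary CMC surfaces which are \emph{stable} (nonnegative second variation) yet are \emph{not} of the conjectured form. Hence your hoped-for Alexandrov/Ros-type rigidity step cannot succeed if it relies only on the first and second variations of the perimeter functional; any genuine proof must use global minimality in a way that distinguishes the conjectured candidates from these stable exotic competitors. Your reflection-to-$\RR^n$ picture is correct as far as it goes, but noncompact $W$-periodic CMC hypersurfaces include Lawson-type examples (the paper cites \cite{Lawson1970} and \cite{Ross1992}), so the moving-plane argument you propose will not close on its own.

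In short: your plan is reasonable as a research program but is not a proof, and the paper neither claims nor contains one. The paper's contribution is to sidestep the CMC classification entirely by restricting to cubical competitors, where Steiner symmetrization plus a finite first-variation/case analysis suffices.
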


\begin{rem}\label{rem:dim2}
    In dimension $n=2$, Conjecture \ref*{conj:isoperimetric} is true.  
    One can prove it by observing that any minimizer (via the first variation formula) must have boundary equal to an arc of a circle or a straight line segment, and the boundary must meet the boundary of the square $[0,1]^2$ perpendicularly.  
    Combining this with the orthogonal Steiner symmetrizations (discussed in Section \ref{sec:reduction}), one obtains the desired result.  
\end{rem}

Conjecture~\ref*{conj:isoperimetric} is still open in dimensions $\geq 3$.    
Proving that minimizers exist follows from a standard
compactness argument (since $[0,1]^n$ is compact).  
In addition, we obtain that the boundary of a minimizer must be regular except for a set of dimension
at most $n - 8$, and must meet $\partial [0,1]^n$ perpendicularly.
In dimension $3$, Conjecture~\ref*{conj:isoperimetric} is called the spheres-tubes-slabs conjecture, due to the nature
of the three possible types of minimizers described in Conjecture~\ref*{conj:isoperimetric}; see Figure~\ref*{fig:sphere-tube-slab}.

\begin{figure}[h]
    \includegraphics[width=\textwidth]{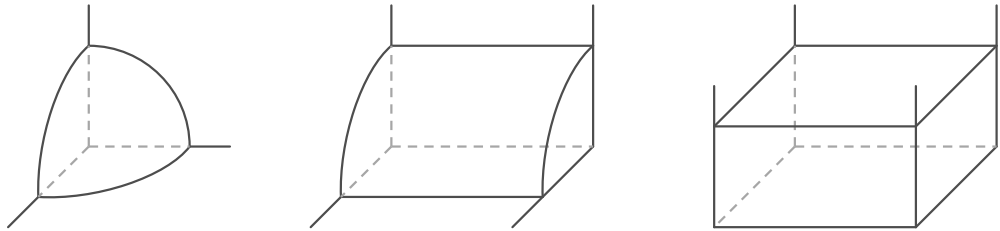}
    \caption{The proposed minimizers of relative perimeter in the spheres-tubes-slabs conjecture.}
    \label{fig:sphere-tube-slab}
\end{figure}


The spheres-tubes-slabs conjecture has been studied extensively, for example in \cite{Rit} and \cite{RitRos}; \cite{Ros2020THEIP} also has a nice description of it
 and related results.  
In \cite{Rit}, Ritor\'{e} proved that there are subsets of $[0,1]^3$ which have zero first variation,
are stable (with respect to the second variation), and whose boundaries are not of the form described in Conjecture \ref*{conj:isoperimetric} (see \cite{Lawson1970}, \cite{Ross1992},  and pages 10 and 18 of \cite{Ros2020THEIP}, 
).  
This result shows that one cannot hope to resolve the spheres-tubes-slabs conjecture simply by studying the first and second variations.


One of the main obstructions to addressing Question \ref*{ques:isoperimetric} is that the unit cube has
only a finite number of symmetries.  
In $\mathbb{R}^n$, we have all translation and reflection symmetries, and this allows us
to use the Steiner symmetrization process to prove that balls minimize perimeter (see the description of Steiner symmetrizations in Section \ref{sec:reduction}).

One special case which has been resolved in all dimensions is the case where the volume $V=\frac{1}{2}$.  In this case it is known
that the minimizers are of the form $[0,1/2] \times [0,1]^{n-1}$ (up to isometries of $[0,1]^n$); see, for example, \cite{Ledoux}.


The purpose of this article is to consider an analogous question to Question~\ref*{ques:isoperimetric}.  
We will first need the following definition:

\begin{defn}
    \label{defn:cubical_subset}
    We say that a compact subset $X \subset [0,1]^n$ is \emph{cubical} if its boundary 
    is contained in the union of finitely many hyperplanes, all of whose outer normal vectors lie in the set $\{\pm e_1, \pm e_2, \dots , \pm e_n \}$.
    Here, $e_1,\dots,e_n$ is the standard basis for $\RR^n$.
\end{defn}

\begin{rem}
    The term ``orthogonal polyhedra" has also been used to refer to these objects in the literature.  
    For the $2$-dimensional case, they are called ``rectilinear polygons."
\end{rem}


We now consider Question~\ref*{ques:isoperimetric} restricted to these cubical sets, and we make the following conjecture, analogous to
Conjecture~\ref*{conj:isoperimetric}:

\begin{conj}
    \label{conj:cubical_isoperimetric}
    Up to isometries of $[0,1]^n$ and sets of measure $0$, the minimizers for Question~\ref*{ques:isoperimetric} restricted to cubical subsets of $[0,1]^n$
    are of the form $[0,a]^m \times [0,1]^{n-m}$ for some $a \in [0,\frac{1}{2}]$ and $m \in \{1,\dots,n\}$.
\end{conj}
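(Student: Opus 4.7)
The plan is to prove the three-dimensional case of Conjecture~\ref*{conj:cubical_isoperimetric} (the setting of the paper) via a symmetrize-slice-optimize strategy; the general $n$-dimensional case should follow by induction on $n$.

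First, I reduce to \emph{lower sets} (cubical sets closed under coordinatewise $\le$, i.e.\ ``stuck in the corner at the origin''). For each direction $e_i$, define the compression $C_i$ that replaces each one-dimensional slice $X \cap \ell$ of $X$ (where $\ell$ is a line parallel to $e_i$) by the interval $[0, |X \cap \ell|]$ based at $x_i = 0$. A combinatorial check shows that $C_i$ sends cubical sets to cubical sets---the new slice lengths are piecewise constant on the rectangular partition of the perpendicular face induced by the walls of $X$---while preserving volume and (weakly) decreasing relative perimeter, since boundary pieces pushed onto the face $\{x_i = 0\}$ do not count in $\relper$. Iterating $C_1, C_2, C_3$ reduces to the case that $X$ is a cubical lower set. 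Slicing such an $X$ by horizontal planes $\{z = t\}$ produces a non-increasing, piecewise-constant family of two-dimensional cubical lower sets $A_t$, and decomposing $\relper(X)$ into vertical walls and horizontal faces lying in the interior of $[0,1]^3$ gives
\[
\relper(X) = \int_0^1 \relper_{[0,1]^2}(A_t)\, dt + \bigl(v(0^+) - v(1^-)\bigr),
\]
where $v(t) = \area(A_t)$.

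For the two-dimensional estimate, parameterize any cubical lower set $A \subset [0,1]^2$ as $\{y \le g(x)\}$ with $g$ non-increasing. Its staircase top boundary has $\ell^1$-length $g(0) + \sup\{x : g(x) > 0\}$, and after subtracting the portions lying on $\partial[0,1]^2$, applying AM-GM to the bound $\area(A) \le g(0) \cdot \sup\{x : g(x) > 0\}$ yields $\relper_{[0,1]^2}(A) \ge I(v) := \min(2\sqrt{v},\, 1,\, 2\sqrt{1-v})$, with equality realized by a corner square $[0,\sqrt v]^2$, a strip $[0,v]\times[0,1]$, or the complement of a corner square in the three regimes $v \le 1/4$, $v \in [1/4, 3/4]$, and $v \ge 3/4$ respectively. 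Substituting reduces the problem to minimizing
\[
F(v) = \int_0^1 I(v(t))\, dt + v(0^+) - v(1^-)
\]
over non-increasing step functions $v \colon [0,1] \to [0,1]$ with $\int_0^1 v(t)\, dt = V \in [0, 1/2]$.

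For the final optimization, using concavity of $I$ one can show by direct case analysis that any $v$ with a level strictly between $0$ and $v(0^+)$ can be replaced either by a constant profile or by a ``single step to zero'' profile without increasing $F$. Explicit minimization over these two families then produces exactly three candidates: the cube $[0, V^{1/3}]^3$ (from $v \equiv V^{2/3}$ on $[0, V^{1/3}]$, $F = 3V^{2/3}$, optimal for $V \le (2/3)^6 = 64/729$); the tube $[0, \sqrt V]^2 \times [0,1]$ (from $v \equiv V$ with square cross-sections, $F = 2\sqrt V$, optimal for $64/729 \le V \le 1/4$); and the slab $[0, V] \times [0, 1]^2$ (from $v \equiv V$ with strip cross-sections, $F = 1$, optimal for $1/4 \le V \le 1/2$). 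The main obstacle I expect is this final one-dimensional optimization: tracking the boundary term $v(0^+) - v(1^-)$ against the three regimes of $I$ requires careful bookkeeping, and ruling out subtle mixed profiles (e.g.\ those straddling the plateau $v \in [1/4, 3/4]$) is where the delicate work lies. A secondary but nontrivial point is verifying that the compressions $C_i$ genuinely preserve cubical-ness in three dimensions, since the compressed length function must respect a common rectangular subdivision of the perpendicular face.
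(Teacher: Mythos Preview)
Your proposal takes a genuinely different route from the paper.  The paper, after the same compression step you call $C_i$ (their Steiner symmetrization), uses \emph{first-variation} arguments: it perturbs individual singular slices, shows that two slices in the same or different directions must have equal first variation in any minimizer, and thereby reduces to a short finite list of ``special'' configurations (at most one singular hyperplane per direction) which are then checked by hand.  Your approach instead slices by $\{z=t\}$, invokes the $2$-dimensional cubical profile $I(v)=\min(2\sqrt v,1,2\sqrt{1-v})$, and converts the problem into minimizing the functional
\[
F(v)=\int_0^1 I(v(t))\,dt + v(0^+)-v(1^-)
\]
over non-increasing step functions.  This is attractive because it makes the induction-on-dimension idea visible, whereas the paper's case analysis is specifically three-dimensional.

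The decomposition formula and the $2$-dimensional bound are essentially correct (your AM--GM sketch needs a little care when $g(0)=1$ or the staircase reaches $x=1$, but this is routine).  The real issue is the final step.  Concavity of $I$ does let you eliminate any \emph{strictly interior} level: replacing a level $v_i$ with $0<i<k$ by the appropriate mixture of $v_{i-1}$ and $v_{i+1}$ decreases $\int I(v)$ and leaves the boundary term $v(0^+)-v(1^-)=v_0-v_k$ unchanged.  But once you are down to two levels $v_0>v_1>0$, neither move is free: pushing $v_1\to 0$ increases the boundary term by $v_1$, and concavity only tells you the integral term drops by at most $\ell_1 I(v_1)-\ell_1' I(v_0)$, which need not dominate $v_1$ (take $v_0,v_1$ close together and $\ell_1$ small).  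Pushing toward the constant profile has the same problem in reverse.  So ``concavity plus case analysis'' hides a genuine two-parameter optimization over $(v_0,v_1)$ with the constraint $\ell_0 v_0+\ell_1 v_1=V$, across the three regimes of $I$; this is exactly the computation that replaces the paper's Section~3 case analysis, and you have not indicated how it goes.  It is doable, but it is the whole difficulty, not a bookkeeping detail.

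Finally, the remark that the general $n$ ``should follow by induction'' is optimistic: the inductive step needs the $(n-1)$-dimensional profile $I_{n-1}$ as input and then a fresh $1$-dimensional optimization against it, and there is no reason to expect that optimization to stay tractable as $n$ grows.  The paper makes no such claim.
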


Our main theorem shows that Conjecture~\ref*{conj:cubical_isoperimetric} holds in dimension $n = 3$:

\begin{thm}
    \label{thm:main}
    Fix a volume $V \in [0,\frac{1}{2}]$.  
    Then the cubical subsets of $[0,1]^3$ which have measure $V$
    and minimal relative perimeter among all cubical subsets are of the form $[0,a]^m \times [0,1]^{3-m}$ for some $a \in [0,\frac{1}{2}]$ and $m \in\{1,2,3\}$, up to isometries
    of $[0,1]^3$ and sets of measure $0$.  
\end{thm}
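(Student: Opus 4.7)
My plan is to combine the Steiner symmetrization reduction of Section~\ref{sec:reduction} with a case analysis powered by the Loomis-Whitney inequality. As a first step, I would apply the orthogonal Steiner symmetrizations of Section~\ref{sec:reduction} in each coordinate direction. These preserve cubicality (because the length of the intersection of $X$ with any axis-aligned line is a step function of the remaining coordinates), preserve volume, and do not increase relative perimeter. After applying all three, one may assume $X$ is \emph{monotonic}: whenever $(x_1,x_2,x_3)\in X$ and $0 \le x_i' \le x_i$ for all $i$, also $(x_1', x_2', x_3') \in X$.

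For such an $X$ I would set $S_i = X \cap \{x_i = 0\}$ and $T_i = X \cap \{x_i = 1\}$, both viewed as subsets of the $(x_j, x_k)$-face. Monotonicity gives $T_i \subseteq S_i$, and the only portion of $\partial X$ not contained in $\partial [0,1]^3$ consists of the ``top'' axis-aligned faces with outer normals $+e_i$, whose total area is $\area(S_i) - \area(T_i)$. Therefore
\[
\relper(X) = \sum_{i=1}^{3} \bigl(\area(S_i) - \area(T_i)\bigr),
\]
while $V \le \area(S_i)$ for each $i$ and $V^2 \le \area(S_1)\area(S_2)\area(S_3)$ by Loomis-Whitney.

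I would then split into cases by the cardinality of $\{i : T_i \ne \emptyset\}$. When all three $T_i$ are empty, AM-GM combined with Loomis-Whitney gives $\relper(X) \ge 3\bigl(\prod_i \area(S_i)\bigr)^{1/3} \ge 3V^{2/3}$, with equality exactly when $X = [0,V^{1/3}]^3$. When exactly one $T_i$ is nonempty (say $i=3$), $X$ contains the tube $T_3 \times [0,1]$ while the ``lid'' above it lies strictly below $x_3 = 1$; a sharpened Loomis-Whitney analysis combined with the two-dimensional case (Remark~\ref{rem:dim2}) applied to horizontal cross-sections should yield $\relper(X) \ge 2\sqrt V$, with equality only for the square tube $[0,\sqrt V]^2 \times [0,1]$. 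When two $T_i$ are nonempty, $X$ contains a pair of perpendicular tubes, and one concludes $\relper(X) \ge 1$ with equality only for the slab. When all three $T_i$ are nonempty, $X$ contains a ``plus'' of three perpendicular tubes, and a direct estimate shows $\relper(X) > 1$, so this case does not contribute a minimizer.

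The main obstacle will be the cases with at least one nonempty $T_i$, since the Loomis-Whitney bound is no longer tight there: the correction term $-\sum_i \area(T_i)$ must be balanced against the extra volume forced into $X$ by the tubes $T_i \times [0,1]$. I expect the hardest single step to be the one-tube case, where one needs a sharp two-dimensional analysis of the rim $S_3 \setminus T_3$ to preclude a staircased lid beating a pure tube of the same volume.
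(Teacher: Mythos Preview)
Your opening moves are sound: the three orthogonal Steiner symmetrizations do produce a monotone cubical set, and the projection identity $\relper(X)=\sum_i\bigl(\area(S_i)-\area(T_i)\bigr)$ is correct for such sets.  Your Case~0 (all $T_i=\varnothing$) is complete and in fact slicker than the paper's handling of that subcase: AM--GM together with the equality case of Loomis--Whitney pins down the cube with no first-variation argument needed.

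The gap is in Cases~1--3, and it is not merely cosmetic.  In Case~1 (say $T_3\neq\varnothing$, $T_1=T_2=\varnothing$) you need
\[
\area(S_1)+\area(S_2)+\bigl(\area(S_3)-\area(T_3)\bigr)\ \ge\ 2\sqrt{V},
\]
and no Loomis--Whitney-type inequality controls the \emph{difference} $\area(S_3)-\area(T_3)$.  Slicing horizontally gives $\area(S_1)+\area(S_2)=\int_0^1\relper_2(X_z)\,dz\ge\int_0^1 2\sqrt{\area(X_z)}\,dz$, but Jensen runs the wrong way here (square root is concave), so the two-dimensional profile integrates to \emph{at most} $2\sqrt{V}$; the shortfall must be recovered from the lid term $\area(S_3)-\area(T_3)$, and that trade-off is exactly the hard part you have not done.  (Concretely, for $X=[0,0.4]^2\times[0,1]\cup[0,0.5]^2\times[0,0.5]$ one has $\area(S_1)+\area(S_2)=0.9<2\sqrt{V}\approx 0.906$, and only the lid term rescues the inequality.)  Cases~2 and~3 have the same feature.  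The paper avoids this entirely by inserting a second reduction after Steiner symmetrization: Lemma~\ref{lem:two_singular_same} uses first-variation moves on \emph{parallel} singular slices to force at most one singular hyperplane per direction, collapsing the problem to the finite list of ``special'' shapes described in Lemma~\ref{lem:boundary}, each determined by at most three real parameters.  The case analysis in Section~\ref{sec:identify} then finishes with elementary algebra and one further first-variation comparison (Lemma~\ref{lem:two_singular_different}).  Without a reduction of that kind, your Cases~1--3 remain infinite-dimensional optimization problems, and the phrase ``sharpened Loomis--Whitney analysis'' does not yet name a mechanism that closes them.
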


\begin{figure}[h]
    \includegraphics[width=\textwidth]{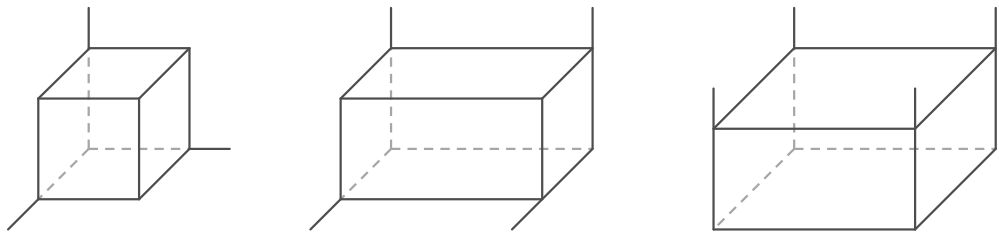}
    \caption{The minimizers of relative perimeter among cubical subsets.}
\end{figure}

For context, Loomis and Whitney proved in \cite{LM} that the product of the $(n-1)$-dimensional areas of the orthogonal projections of any measurable subset of $\mathbb{R}^n$ is at least the $n$-dimensional measure of that set to the power $n-1$.
The Loomis-Whitney inequality has been extended in a variety of ways, most notably to the
Brascamp-Lieb-Luttinger inequality (see \cite{BLL}).
We can view Theorem~\ref*{thm:main} as a relative version of the Loomis-Whitney inequality, in that it also produces an upper bound on the square of the volume as a function of
the product of the \emph{relative} projections of the set into each orthogonal direction.

The Loomis-Whitney inequality can also be used to prove the isoperimetric inequality in $\mathbb{R}^n$ with a nonsharp
constant.  
Similarly, Theorem \ref*{thm:main} can be used to give a relative isoperimetric inequality with a nonsharp constant.

Finally, if we are working with $\ell^1$ perimeter (instead of the standard $\ell^2$ perimeter), Theorem~\ref*{thm:main}
does answer the question completely.  
This is because in $\ell^1$ any set can be approximated by
cubical sets both in volume and perimeter; this is not possible in $\ell^2$.  
Note that here we are
using the $\ell^1$ (respectively $\ell^2$) norm to define the corresponding Hausdorff measures on subsets
of $[0,1]^n \subset \mathbb{R}^n$.
There has been interest in the related \textit{``double bubble''} problem for the $\ell^1$ norm, with the $\RR^2$ case being solved in \cite{Wulff}, and a conjectured solution for $\RR^3$ given in \cite{crystal}.

The proof of Theorem~\ref*{thm:main} will have two components.  
The first component is to reduce the problem to what we call \textit{special} cubical subsets.
Since the definition of \textit{special} is somewhat technical, we postpone
its statement until Section~\ref{sec:reduction}.
The main proposition of Section~\ref*{sec:reduction} is the following:
\begin{prop}
    \label{prop:reduction}
    Suppose that $X$ is a cubical subset of $[0,1]^n$.  
    Then there is a special cubical subset $X^*$ with $\vol(X^*) = \vol(X)$ such that
    \[
        \relper(X^*) \leq \relper(X).
    \]
\end{prop}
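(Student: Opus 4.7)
The plan is to adapt the classical Steiner symmetrization procedure to the discrete setting of cubical subsets, a tool that Remark~\ref{rem:dim2} already singles out as the key ingredient in the $n=2$ case. Since the authors postpone the definition of \emph{special}, I expect the notion to describe those cubical sets that are fixed by a suitable family of coordinate-wise rearrangements anchoring the set against the faces of $[0,1]^n$.

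For each coordinate direction $e_i$, I would introduce an operation $S_i$ as follows. For $\hat{x} \in [0,1]^{n-1}$, let $\ell_i(\hat{x})$ denote the total $1$-dimensional Lebesgue measure of the fiber $X \cap (\{\hat{x}\} \times [0,1])$ in the $e_i$-direction, and set
\[
S_i(X) = \{(\hat{x}, t) \in [0,1]^n : 0 \le t \le \ell_i(\hat{x})\},
\]
so that each fiber is collapsed into a single interval anchored on the face $\{x_i = 0\}$. One then needs to verify three properties: that $S_i(X)$ is again cubical (immediate because $\ell_i$ is piecewise constant on axis-aligned rectangles in $[0,1]^{n-1}$), that $\vol(S_i(X)) = \vol(X)$ (Fubini), and the non-trivial inequality $\relper(S_i(X)) \le \relper(X)$. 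This last property decomposes along two kinds of faces. Contributions perpendicular to $e_i$: in $S_i(X)$ the bottom endpoint of every fiber lies on $\partial[0,1]^n$ and does not count, while an original fiber consisting of $k$ intervals contributes at least as many relative endpoints as a single collapsed interval; a fiber-by-fiber comparison together with a Cavalieri-type integration gives the bound. Contributions perpendicular to $e_j$ for $j \neq i$: for $S_i(X)$ these equal the total variation of $\ell_i$ across each lattice-aligned cross-section, and since the absolute difference of lengths is bounded above by the $1$-dimensional symmetric difference of the original fibers on either side of the cross-section, the symmetrized contribution is dominated by the original one.

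Once each $S_i$ is shown to be perimeter non-increasing, the final step is to iterate the operations $S_1, \dots, S_n$ to reach the class of \emph{special} sets. The main obstacle I anticipate is that applying $S_j$ can in principle disturb the anchoring achieved by a previous $S_i$ with $i \ne j$, so termination and convergence to a well-defined class of sets is not automatic. Two viable remedies come to mind: either introduce a strictly decreasing discrete complexity functional—such as a lex-ordered count of the breakpoints of the length functions $\ell_1, \dots, \ell_n$—that forces stabilization after finitely many applications, or exploit compactness of the space of cubical sets of bounded relative perimeter and fixed volume to extract a limit of the symmetrization sequence and then verify by hand that the limit satisfies the conditions defining \emph{special}. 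Either route produces a cubical $X^*$ with $\vol(X^*) = \vol(X)$ and $\relper(X^*) \le \relper(X)$, as required.
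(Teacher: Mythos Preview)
Your Steiner symmetrization step is correct and matches the paper. Your worry about $S_j$ undoing the anchoring from an earlier $S_i$ is unnecessary: once $S_{e_i}(X)=X$, one has $S_{e_i}(S_{e_j}(X))=S_{e_j}(X)$ (item~(\ref{item:succ_sym}) of Proposition~\ref{prop:steiner}), so a single pass $S_{e_n}\circ\cdots\circ S_{e_1}$ already produces a set fixed by every $S_{e_i}$. No complexity functional or compactness argument is needed to reach the class of \emph{symmetrized} sets.

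The real gap is that \emph{symmetrized} is strictly weaker than \emph{special}. Definition~\ref{defn:special_cubical} also demands that each direction $e_i$ admit \emph{at most one} singular point in $(0,1)$, i.e.\ at most one hyperplane $H^i_s$ carrying a top-dimensional piece of $\partial X$. Symmetrized sets can violate this badly: any monotone staircase anchored at the origin, say $\bigcup_k [0,a_k]\times[0,b_k]$ with $a_1>a_2>\cdots$ and $b_1<b_2<\cdots$, is fixed by both $S_{e_1}$ and $S_{e_2}$ yet has many singular points in each direction, and further Steiner symmetrization does nothing to it. The paper removes these extra steps with a \emph{first variation} argument (Lemma~\ref{lem:two_singular_same}): given two singular slices in the same direction, slide them toward one another at rates chosen to preserve volume; either their first variations of $\relper$ differ and the motion strictly decreases perimeter, or they agree and the motion can be run until two slices merge, strictly reducing the total singular-point count. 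Iterating this yields a special set. Your proposal contains no analogue of this mechanism, and both of your suggested remedies target only the (non-)issue of Steiner stabilization, not the reduction of singular points.
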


The proof of this proposition first involves applying three orthogonal Steiner symmetrizations to the set; this operation
was developed by Steiner in the 1840s to prove that, among convex sets, the ball minimizes perimeter.  
The remainder
of the proof of Proposition~\ref*{prop:reduction} will involve studying certain variations for cubical subsets and arguing via first variation methods.  
We note that the results of Section \ref*{sec:reduction} apply to all dimensions.

The remainder of the proof of Theorem~\ref*{thm:main} is contained in Sections \ref*{sec:identify} and \ref*{sec:unique}.
In Section \ref{sec:identify}, we show that the only special cubical subsets which minimize relative perimeter in dimension $3$ are the ones described in Theorem~\ref*{thm:main}.  
This involves using the first variations studied in Section~\ref*{sec:reduction} combined with a case-by-case examination of possible special cubical subsets.
In Section \ref{sec:unique}, we show that the minimizers identified in Section \ref*{sec:identify} are indeed the \emph{unique} minimizers of relative perimeter among all cubical subsets.

\section{Reduction}\label{sec:reduction}

The purpose of this section is to prove Proposition~\ref*{prop:reduction}.
To begin, we will define a Steiner symmetrization, a process of modifying a set which maintains volume and does not increase perimeter.
This type of process was first introduced in the 1840s by Steiner in \cite{steiner} to prove that convex subsets of $\mathbb{R}^2$ which minimize perimeter are balls (see also \cite{BurchardCourse}). 
We will define Steiner symmetrizations for cubical subsets of $[0,1]^n$ here, but first, we set up some notation and terminology that will be used throughout this section.

\begin{defn}
    Let $e_1,\dots,e_n$ denote the standard basis for $\RR^n$.
    For every $s \in [0,1]$ and $i \in \{1, \dots, n\}$, we define $H_s^i$ to be the \textit{hyperplane} $\{ \sum x_j e_j \in \mathbb{R}^n : x_i = s \}$.
    Given a cubical subset $X \subset [0,1]^n$, we define the \emph{slice} of $X$ with respect to $i$ and $s$ to be $X_s^i = \partial X \cap H_s^i$.
    We say that $s$ is a \emph{singular point} of $X$  with respect to $e_i$ (or simply $i$) if the slice $X_{s}^i$ has Hausdorff dimension $n - 1$.
\end{defn}


\begin{rem}
    Notice that for any cubical subset $X$, because $\partial X$ is contained in finitely many hyperplanes, it follows that $X$ has finitely many critical points with respect to any direction $e_i$.
\end{rem}


Now we define the Steiner symmetrization of cubical subsets.

\begin{defn}
    \label{defn:Steiner}
    Let $X \subseteq [0,1]^n$ be a cubical subset.
    Given $i \in \{1,\dots,n\}$ and $y \in H_0^i \cap [0,1]^n$, let $\ell$ denote the line with direction $e_i$ that is incident with $y$, and define $f(y)$ to be the measure of $\ell \cap X$.
    We define the interval
    \[
        I(y) = \begin{cases}
            [0, f(y)] & \text{if } f(y) \neq 0,\\
            \varnothing   & \text{if } f(y) = 0.
        \end{cases}
    \]
    The \emph{Steiner symmetrization} of $X$ in the direction $e_i$ is then defined as 
    \[
        S_{e_i}(X) = \bigcup_{y \in H_0^i \cap [0,1]^n} \{ y + t e_i :  t \in I(y) \}.
    \]
    
    
    
\end{defn}



We will now detail several properties of Steiner symmetrizations for cubical regions, including the fact that they preserve volume without increasing relative perimeter.


\begin{prop}
    \label{prop:steiner}
    Let $X$ be a cubical subset of $[0,1]^n$ and let $i\in\{1,\dots,n\}$.
    Then following hold:
    \begin{enumerate}
        \item $S_{e_i}(X)$ is also a cubical subset.  \label{item:symm_is_cubical}
        \item $\vol(X) = \vol(S_{e_i}(X))$. \label{item:vol_pres}
        \item $\relper(X) \geq \relper(S_{e_i}(X))$. \label{item:relper_dec}
        \item $\relper(X) = \relper(S_{e_i}(X))$ if and only if $S_{e_i}(X) = X$ up to the isometries of $[0,1]^n$ and sets of measure zero. \label{item:relper_same}
        \item If $S_{e_i}(X) = X$, then $S_{e_i}(S_{e_j}(X))=S_{e_j}(X)$ for all $j\in\{1,\dots,n\}$. \label{item:succ_sym}
    \end{enumerate}
\end{prop}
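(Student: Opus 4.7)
The plan is to handle the five items in order, with items (1)--(2) being structural, (3)--(4) being the classical symmetrization inequality and its equality case, and (5) an explicit coordinate calculation. Since $X$ is a finite union of axis-aligned boxes, projecting the finitely many hyperplanes containing $\partial X$ perpendicular to $e_i$ produces a cubical decomposition of $H_0^i \cap [0,1]^{n-1}$ on whose maximal cells $C$ the slice $\ell \cap X$ is constant as a set, call it $A_C \subseteq [0,1]$; in particular $f$ is constant on each $C$, so $S_{e_i}(X)$ is itself a finite union of axis-aligned boxes, proving (\ref{item:symm_is_cubical}), while Fubini immediately gives $\vol(X) = \int f \, dy = \vol(S_{e_i}(X))$ for (\ref{item:vol_pres}).

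For (\ref{item:relper_dec}), I would split the essential boundary of each set into a horizontal part (on hyperplanes $H_s^i$) and a vertical part (on hyperplanes $H_s^j$, $j \neq i$), and compare them separately modulo the pieces on $\partial [0,1]^n$. For the horizontal part, fix $y$: the $e_i$-slice $\ell \cap X$ is a finite union of intervals contributing some number of endpoints in the interior $(0,1)$, whereas $\ell \cap S_{e_i}(X) = [0,f(y)]$ contributes at most one such endpoint (the top at $f(y)$, and only when $f(y) < 1$), which is at most the count for $X$ whenever $0 < f(y) < 1$. Integrating this pointwise comparison gives the horizontal inequality. For the vertical part, for each $(n-2)$-face $F$ in the interior of $[0,1]^{n-1}$ shared by two adjacent maximal cells $C_j, C_k$, the wall of $\partial X$ above $F$ has $(n-1)$-area $|F| \cdot m(A_j \triangle A_k)$, while the corresponding wall of $\partial S_{e_i}(X)$ has area $|F| \cdot |m(A_j) - m(A_k)|$; the elementary inequality $|m(A) - m(B)| \leq m(A \triangle B)$ then finishes the comparison after summing over all interior faces.

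Item (\ref{item:relper_same}) is the step I expect to be the main obstacle, since it requires tracing equality through both comparisons above and then propagating the resulting constraints globally. Tightness in the horizontal comparison forces each slice $\ell \cap X$ with $0 < f(y) < 1$ to be a single interval anchored to an endpoint of $[0,1]$, i.e.\ of the form $[0,f(y)]$ or $[1-f(y), 1]$; tightness in the vertical comparison forces the slice sets $A_j, A_k$ of adjacent cells to be nested. Working through these combined constraints on each connected region of $\{0 < f(y) < 1\}$ inside $H_0^i \cap [0,1]^{n-1}$ forces the anchoring to be consistent there, so that applying the reflection $x_i \mapsto 1 - x_i$ (which is an isometry of $[0,1]^n$) where needed identifies $X$ with $S_{e_i}(X)$ up to a null set. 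The delicate point is checking that once the single-interval and adjacency-nesting conditions are both enforced, no configuration inequivalent to the (possibly reflected) Steiner-symmetrized one can survive.

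For (\ref{item:succ_sym}), the hypothesis $S_{e_i}(X) = X$ is equivalent to $X$ being the ``subgraph'' of its length function $g$ along $e_i$, namely $X = \{(y, t) \in (H_0^i \cap [0,1]^{n-1}) \times [0,1] : 0 \leq t \leq g(y)\}$. For $j \neq i$, writing $z = (y', t)$ with $t = z_i$ and $y' = (x_\ell)_{\ell \neq i, j}$, the length function of $S_{e_j}$ is $h(z) = m(\{s \in [0,1] : g(s, y') \geq t\})$. For fixed $y'$ this is non-increasing in $t$, so the set $\{t : h(y', t) \geq x_j\}$ is a down-set of the form $[0, T(y', x_j)]$; hence $S_{e_j}(X)$ is again a subgraph along $e_i$, giving $S_{e_i}(S_{e_j}(X)) = S_{e_j}(X)$. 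The case $j = i$ is immediate from the definition, since the length function of $S_{e_i}(X)$ in the $e_i$-direction is still $f$.
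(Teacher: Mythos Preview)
Your arguments for items (1), (2), (3), and (5) are correct and track the paper's approach closely: the paper also derives (1) from the finiteness of the value set of $f$, obtains (2) by Fubini, treats (3) via a line-crossing count (your symmetric-difference accounting for the vertical walls is in fact more explicit than the paper's version), and proves (5) by the same monotonicity-of-superlevel-sets idea, phrased there on two-dimensional coordinate slices.

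Item (4) is where your caution is warranted, and the difficulty is worse than you indicate. The phrase ``applying the reflection $x_i\mapsto 1-x_i$ \emph{where needed}'' is exactly where your sketch breaks: an isometry of $[0,1]^n$ is a single global map, so you cannot reflect one connected region of $\{0<f<1\}$ while leaving another fixed. In fact item (4) as stated is false. In $[0,1]^2$ take
\[
X \;=\; \bigl([0,\tfrac14]\times[0,\tfrac13]\bigr)\;\cup\;\bigl([\tfrac34,1]\times[\tfrac23,1]\bigr).
\]
Every $e_1$-fiber of $X$ is a single interval anchored at $0$ or at $1$, your adjacency-nesting condition holds vacuously since the two rectangles do not abut, and a direct computation gives $\relper(X)=\relper(S_{e_1}(X))=\tfrac{7}{6}$; yet comparing the lengths of the coordinate projections shows that no element of the dihedral group of the square carries $X$ to $S_{e_1}(X)$. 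The paper's own proof of (4) passes over precisely the same point, simply asserting without argument that the anchored-interval condition is equivalent to $S_{e_i}(X)=X$ up to an isometry; so your plan does not miss anything the paper supplies, but neither can it be completed as written.
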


\begin{proof}
    To prove (\ref*{item:symm_is_cubical}), without loss of generality, we may assume that $i = n$.
    Let the singular points of $X$ in the direction $e_n$ be $s_1 < s_2 < \dots < s_{m-1} < s_m$.
    We will show that $\partial S_{e_n}(X)$ is contained in a union of finitely many hyperplanes $H_r^j$.
    Considering any point $x\in\partial S_{e_n}(X)$, if there is some $j \in \{1 ,\dots, n-1\}$ and some singular point $t$ with respect to $e_j$ such that $x$ lies in
    the hyperplane $H_t^j$, then there are only finitely many such hyperplanes.
    Alternatively, $x$ could be contained in one of finitely many coordinate hyperplanes $H_0^j$.
    For all other points $x \in \partial S_{e_n}(X)$, we have that $x \in H_{f(\hat{x})}^n$ by definition of $S_{e_n}(X)$, where $\hat{x}$ denotes the image of $x$ under the orthogonal projection onto the coordinate hyperplane $H_0^n$.
    We will show that there are finitely many possible values of $f(\hat{x})$ as $x$ varies.

    Considering any line $\ell$ with direction $e_n$ passing through a point $y \in H_0^n \cap [0,1]^n$, the intersection $\ell \cap X$ is either empty or consists of a finite number of disjoint intervals with endpoints whose $n$th coordinates lie in $\{s_1, \dots, s_m\}$.
    In particular, there is an ordered set of indices $1 \leq k_1 < k_2 < \dots < k_{2p-1} < k_{2p} \leq m$, dependent on $\ell$, such that the $n$th components of all elements in $\ell \cap X$ comprise $[s_{k_1},s_{k_2}] \sqcup [s_{k_3},s_{k_4}] \sqcup \dots \sqcup [s_{k_{2p-1}},s_{k_{2p}}]$.
    Notice that there are only finitely many choices for the set of indices $\{k_1, \dots, k_{2p}\}$.
    For any point $y \in H_0^n \cap [0,1]^n$ for which $\ell \cap X$ is non-empty, the value of $f(y)$ is then given by
    \[
        f(y) = \sum_{k=1}^p s_{j_{2k}} - s_{j_{2k-1}}.
    \]
    Thus, there are only finitely many possible values for $f$.
    This completes the proof of (\ref*{item:symm_is_cubical}).
    
    
    
    
    Now (\ref*{item:vol_pres}) follows from Fubini's theorem.  
    For (\ref*{item:relper_dec}), we need only prove that the inequality
    holds in each direction $e_j$.  
    That is, for each $e_j$, the relative perimeter of all components of
    $\partial X$ with normal vector $e_j$ is no larger than the corresponding relative perimeter of $\partial S_{e_i}(X)$
    in the direction $e_j$.  To compute this quantity for $X$, we integrate along the plane perpendicular to $e_i$ with
    an integrand $f(x)$, defined equal to the number of times the line in the direction $e_i$ through $x$ passes through the boundary
    of $X$.  If this number is infinite, we set $f(x)$ to be $0$.  Similarly, to compute this quantity for $S_{e_i}(X)$, we do the
    same, except we define $f(x)$ using the boundary of $S_{e_i}(X)$.
    
    Hence, the aforementioned relationship between the relative perimeters of $X$ and $S_{e_i}(X)$ is true 
    with equality if and only if every line $\ell$ with tangent vector $e_i$ which passes through the interior of $X$ intersects
    $\partial X$ in exactly two points, one of which is on $\partial [0,1]^n$.  This in turn is true if and only if $S_{e_i}(X) = X$ up to the isometries
    of $[0,1]^n$, thus proving (\ref*{item:relper_dec}) and (\ref*{item:relper_same}).

    Finally to prove (\ref*{item:succ_sym}), notice if $i=j$, then the statement holds trivially.
    So without loss generality, assume $i=1$ and $j=2$. 
    Suppose $S_{e_1}(X) = X$.
    Now consider an arbitrary two-dimensional plane $P$ spanned by vectors parallel to $e_1$ and $e_2$, i.e. $P=\{se_1 + te_2 + \sum_{k = 3}^n c_k e_k: s,t\in[0,1]\}$ for some constants $c_3,\dots,c_n\in[0,1]$.
    For a fixed value of $s\in[0,1]$ (resp. $t\in[0,1]$), let $\ell_s$ (resp. $\ell_t$) denote the line segment consisting of points in $P$ for which the $e_1$-coordinate (resp. $e_2$-coordinate) is equal to $s$ (resp. $t$).
    Because $S_{e_1}(X) = X$, the measure of $\ell_t \cap X$ is a non-increasing function of $t$.
    Thus, the measure of $\ell_t \cap S_{e_2}(X)$ is also a non-increasing function of $t$.
    Therefore, for each $s\in[0,1]$, the intersection $\ell_s \cap S_{e_2}(X)$ is either empty or is a connected line segment that intersects the coordinate hyperplane $H_0^1$, thus proving (\ref*{item:succ_sym}).
\end{proof}

Motivated by the previous proposition, we make the following:

\begin{defn}
    \label{defn:symmetrized}
    We say that a cubical subset $X \subset [0,1]^n$ is \emph{symmetrized} if
        $$S_{e_i} (X) = X$$
    for every $i \in \{1,\dots,n\}$.
\end{defn}

We can now define a \emph{special cubical subset}:
\begin{defn}
    \label{defn:special_cubical}
    A cubical subset $X \subset [0,1]^n$ of volume $V\in (0,1/2]$ is \emph{special} if three conditions are
    satisfied:
    \begin{enumerate}
        \item $X$ is symmetrized.
        \item There is an open cube $C \subset \mathbb{R}^n$ centered at $(0, \dots, 0)$
            such that $C \cap X = C \cap [0,1]^n$.
        \item For every $i \in \{1, \dots, n\}$, there is at most one singular point $s$ with respect to $i$ in $(0,1)$.
    \end{enumerate}
\end{defn}

For the next portion of the argument, we consider variations for an arbitrary symmetrized cubical subset $X$ which perturb a given singular slice in a normal direction, and we express the first variation of the relative perimeter for this perturbation.
To be more precise, consider $i \in \{1, \dots, n\}$, and suppose $s \in (0,1)$ is a singular point with respect to $e_i$ for $X$.
Let $S$ denote the slice $X_s^i$ of $X$ with respect to $s$ and $i$.
By the definition of a singular point, $S$ has positive $(n-1)$-dimensional Hausdorff measure; let this value be $A = \mathcal{H}^{n-1}(S) > 0$.
For $r \geq 0$, define the set $S(r)$ to be the Minkowski sum 
\[
    S(r) = S + \{ q e_i : q \in (-r,r) \}. 
\]
Now for $t\in(-\epsilon , \epsilon)$, where $\epsilon > 0$ is sufficiently small, we define the variation $X(t)$ of $X$ as follows:
\[
    X(t) = 
    \begin{cases}
        X \cup \overline{S(t/A)},        & t\geq 0,\\
        X \setminus S(|t|/A),   & t \leq 0,
    \end{cases}
\]
If $\epsilon$ is sufficiently small, then for all $t\in(-\epsilon,\epsilon)$, the set $X(t)$ is a cubical subset of $[0,1]^n$ with the same number of singular points as $X$ in any direction, and
\[
    \vol(X(t)) = \vol(X) + t.
\]


\begin{defn}
    Assume $X$ is a symmetrized cubical subset of $[0,1]^n$, $i \in \{1, \dots, n\}$, and $s \in (0,1)$ is a singular point for $X$ with respect to $e_i$.
    We refer to the \textit{first variation} of $X$ with respect to $s$ and $i$, denoted by $\firstvar(s,i)$, as the first variation of the relative perimeter with respect to the variation $X(t)$, as defined above.
    In particular, 
    \[
        \firstvar(s,i) = \tfrac{d}{dt} \left[\relper(X(t))\right] \big|_{t=0}.
    \]
\end{defn}

Our goal now is to express the first variation of the relative perimeter with respect to this variation $X(t)$ of $X$.
To accomplish this goal, consider $\partial S$. 
We will designate a sign to each $x \in \partial S$ denoted by $\psi(x)$; it will be a value in $\{-1, 0, 1\}$.  
Because $X$ is a symmetrized cubical subset of $[0,1]^n$, there are three possibilities:


\noindent \textbf{Possibility $\mathbf{0}$:} 
$x \in \partial [0,1]^n$, in which case, we set $\psi(x) = 0$.


\noindent {\bf Possibility $\mathbf{1}$: } 
$x \not\in \partial [0,1]^n$, and for every $\delta > 0$, the line segment 
\[
    \{ x + t e_i : t \in (s - \delta, s + \delta) \} 
\]
\emph{is not} entirely contained in $X$.  
In this case, we define $\psi(x)$ to be $1$.

\noindent {\bf Possibility $\boldsymbol{-1}$: } 
$x \not\in \partial [0,1]^n$, and there is some $\delta > 0$ so that the line segment
\[
    \{ x + t e_i : t \in (s - \delta, s + \delta) \} 
\]
\emph{is} entirely contained in $X$.  
In this case, we define $\psi(x)$ to be $-1$.

Since $S$ is a cubical subset of $H_s^i \cap [0,1]^n \cong [0,1]^{n-1}$, $\psi$ is integrable over $\partial S$ (with respect to the $(n-2)$-dimensional Hausdorff measure).  
As such, we may define 
\[
    P = \int_{\partial S} \psi(x) dx. 
\]

\begin{prop}
    \label{prop:firstvar}
    Assume $X$ is a symmetrized cubical subset of $[0,1]^n$, $i \in \{1, \dots, n\}$, and $s \in (0,1)$ is a singular point for $X$ with respect to $e_i$.
    Then 
    \[ 
        \firstvar(s,i) = \frac{P}{A} = \frac{\int_{\partial S} \psi(x) dx}{\mathcal{H}^{n-1}(S)},
    \]
    where $S$ is the slice $X_s^i$, and $\psi:\partial S \to \{-1,0,1\}$ is defined as above.
\end{prop}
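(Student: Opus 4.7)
The plan is to rewrite the variation $X(t)$ in terms of the fiber-length function $f:H_0^i\cap[0,1]^n\to[0,1]$, where $f(y)$ is the length of $X$ along the line through $y$ parallel to $e_i$, and then directly track how $\partial X$ changes to first order in $t$. Because $X$ is symmetrized in direction $e_i$, each nonempty fiber is exactly $\{y+re_i:r\in[0,f(y)]\}$; because $s$ is singular in direction $e_i$, the set $Y=\{y\in H_0^i\cap[0,1]^n:f(y)=s\}$ has $\mathcal{H}^{n-1}(Y)=A$, and the $(n-1)$-dimensional part of $S$ is precisely $Y+se_i$. Any additional contribution to $S$ comes from side walls of $X$ that cross $H_s^i$ in an at most $(n-2)$-dimensional set, which cannot influence first-order quantities. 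Up to an $\mathcal{H}^n$-null set, $X(t)$ is obtained from $X$ by replacing $f|_Y\equiv s$ with $f|_Y\equiv s+t/A$, which immediately gives $\vol(X(t))=\vol(X)+t$ and (after shrinking $\epsilon$) preserves the cubical structure and the singular-point pattern.

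Next I would compute the change in relative perimeter locally. The ``top plateau'' of $\partial X$ above $Y$ shifts rigidly from height $s$ to height $s+t/A$ while retaining $(n-1)$-measure equal to $A$, so it makes no first-order contribution. The entire first-order change therefore comes from the vertical walls of $X$ standing over $\partial Y$. Near $y\in\partial Y$ where the adjacent region of $H_0^i$ has $f\equiv a\ne s$, the corresponding wall has height $|s-a|$ before the perturbation and $|s+t/A-a|$ after, giving a signed change of $(t/A)\cdot\operatorname{sign}(s-a)$ per unit $(n-2)$-Hausdorff measure along $\partial Y$. This sign agrees with $\psi(y+se_i)$: if $a<s$ the fiber over $y$ stops at height $s$ and does not continue, matching Possibility $1$ with $\psi=+1$ (wall grows); if $a>s$ the fiber continues past $s$, matching Possibility $-1$ with $\psi=-1$ (wall shrinks); and if $y+se_i\in\partial[0,1]^n$ the wall lies on $\partial[0,1]^n$ and is excluded from relative perimeter, matching Possibility $0$ with $\psi=0$. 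Integrating over $\partial S$ and differentiating at $t=0$ yields $\firstvar(s,i)=P/A$.

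The main technical step is the sign identification $\operatorname{sign}(s-a)=\psi(y+se_i)$ on each component of $\partial Y\setminus\partial[0,1]^n$. Because $X$ is cubical, $f$ is piecewise constant on a finite rectangular decomposition of $H_0^i\cap[0,1]^n$, so along any such component the adjacent value $a$ is constant, and one may shrink $\epsilon$ so that $|s+t/A-a|$ retains the same sign as $|s-a|$ throughout. All portions of $\partial X$ away from a neighborhood of $H_s^i$ are untouched by the perturbation, so no additional contributions appear.
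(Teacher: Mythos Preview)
Your proposal is correct and follows essentially the same approach as the paper's proof: both observe that the top plateau of area $A$ shifts rigidly while the side walls along $\partial S$ grow or shrink by exactly $t/A$ according to the sign $\psi$, yielding the exact linear formula $\relper(X(t))=\relper(X)+(P/A)\,t$. The paper compresses this into a single displayed equality by partitioning $\partial S$ into $\psi^{-1}(\pm1)$ and $\psi^{-1}(0)$, whereas you unpack the same computation via the fiber-length function $f$ and the adjacent value $a$; your sign identification $\operatorname{sign}(s-a)=\psi(y+se_i)$ is precisely what makes the paper's one-line assertion go through.
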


\begin{proof}
    Let $X(t)$ be the variation of $X$ associated with $i$ and $s$ as defined above.
    Then for any $t \in (-\epsilon,\epsilon)$, where $\epsilon > 0$ is sufficiently small, we have 
    \begin{align*}
        \relper(X(t)) & =  \relper(X) + \tfrac{t}{A} [ 1 \cdot \mathcal{H}^{n-2}(\psi^{-1}(1))  + 0 \cdot \mathcal{H}^{n-2}(\psi^{-1}(0)) \\
        & \hspace{185pt} - 1 \cdot \mathcal{H}^{n-2}(\psi^{-1}(-1) ) ]\\
        & = \relper(X) + \frac{P}{A} t. \qedhere
    \end{align*}    
\end{proof}


There are two important observations involving the first variation.  The first involves two singular points
with respect to the same $i$:
\begin{lem}
    \label{lem:two_singular_same}
    Suppose that $X$ is a symmetrized cubical set of volume $V \in (0,1/2]$.
    Suppose that $0 < s_1 < s_2 < 1$
    are two singular points, both with respect to the same direction $e_i$ for some $i \in \{1, \dots, n\}$.  
    Then there is some
    symmetrized cubical set $\tilde{X}$ with the same volume as $X$ such that one of the following hold, depending on the values of $\firstvar(s_1,i)$ and $\firstvar(s_2,i)$ in $X$:
    \begin{enumerate}
        \item If $\firstvar(s_1,i) = \firstvar(s_2,i)$, then $\relper(\tilde{X}) = \relper(X)$, and the total number of singular points (in all directions) of $\tilde{X}$ is strictly
    less than the total number of singular points of $X$.\label{item:same}
        
        \item If $\firstvar(s_1,i) \neq \firstvar(s_2,i)$, then $\relper(\tilde{X}) < \relper(X).$\label{item:diff}
    \end{enumerate}
\end{lem}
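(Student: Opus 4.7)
The plan is to use the first variation formula of Proposition~\ref{prop:firstvar} applied simultaneously at both singular slices $X_{s_1}^i$ and $X_{s_2}^i$, and then either perturb slightly to decrease the relative perimeter in Case~(\ref{item:diff}), or propagate a perimeter-preserving deformation globally until the combinatorial structure of $X$ simplifies in Case~(\ref{item:same}). For $|t_1|, |t_2|$ sufficiently small, the single-slice variations at $s_1$ and $s_2$ are supported in disjoint neighborhoods of $H_{s_1}^i$ and $H_{s_2}^i$, so we may form the two-parameter family $X(t_1, t_2)$ obtained by performing both simultaneously. This family satisfies
\[
    \vol(X(t_1, t_2)) = \vol(X) + t_1 + t_2
\]
and, by Proposition~\ref{prop:firstvar},
\[
    \relper(X(t_1, t_2)) = \relper(X) + \firstvar(s_1, i)\, t_1 + \firstvar(s_2, i)\, t_2 + o(|t_1| + |t_2|).
\]

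For Case~(\ref{item:diff}), I set $t_1 = -t_2 = \epsilon$ and choose the sign of $\epsilon$ so that $\epsilon(\firstvar(s_1,i) - \firstvar(s_2,i)) < 0$. For $|\epsilon|$ sufficiently small the resulting cubical subset $X'$ has $\vol(X') = \vol(X)$ and $\relper(X') < \relper(X)$. Applying successive Steiner symmetrizations via Proposition~\ref{prop:steiner} then produces the required symmetrized cubical subset $\tilde{X}$ with $\vol(\tilde{X}) = \vol(X)$ and $\relper(\tilde{X}) \le \relper(X') < \relper(X)$.

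For Case~(\ref{item:same}), the same volume-preserving choice $t_1 = -t$, $t_2 = t$ has vanishing first-order effect on the relative perimeter. The key observation is that this infinitesimal identity persists globally: as $t$ increases from $0$, the slices at $s_1$ and $s_2$ translate in the $e_i$-direction without deforming in shape — the regions $S_j$, the areas $A_j$, the boundaries $\partial S_j$, and the $\psi$-values on those boundaries all remain unchanged — for as long as $X(-t, t)$ retains its combinatorial slice structure. Hence the first variations at the two moving slices remain equal, $\relper(X(-t, t))$ stays constant, and $\vol(X(-t, t)) = \vol(X)$ exactly. Let $t^*$ be the first positive value at which the combinatorial structure changes: the two moving slices can merge, one can collide with another singular slice of $X$, or one can reach $\partial[0,1]^n$ and leave $(0,1)$. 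In any such event the total number of singular points of $\tilde{X} = X(-t^*, t^*)$ is strictly smaller than that of $X$, while $\vol(\tilde{X}) = \vol(X)$ and $\relper(\tilde{X}) = \relper(X)$. Finally, $\tilde{X}$ inherits the symmetrized property because the deformation only shifts staircase heights without destroying the monotone line-profile in any coordinate direction.

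The main obstacle will be Case~(\ref{item:same}): one must carefully justify that the one-parameter family $\{X(-t, t)\}_{t \in [0, t^*]}$ remains a cubical subset throughout, that the quantities $P_j$, $A_j$, and the associated $\psi$-data at each moving slice truly persist as the slices translate, and that at the critical time $t^*$ the count of singular points strictly decreases rather than merely rearranging. This reduces to a careful combinatorial analysis of how the slice data $(S_j, \partial S_j, \psi)$ evolves under such deformations of a symmetrized cubical set, including a check that $\tilde{X}$ remains symmetrized without needing a further application of Proposition~\ref{prop:steiner} (which could strictly decrease the relative perimeter and thus contradict the claimed equality).
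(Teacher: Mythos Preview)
Your proposal is correct and follows essentially the same approach as the paper: perturb the two slices in a volume-preserving way and use the (exact, not merely first-order) linearity of Proposition~\ref{prop:firstvar} to either strictly decrease perimeter in Case~(\ref{item:diff}) or slide the slices at constant perimeter until the singular-point count drops in Case~(\ref{item:same}). One small note: with your sign convention $t_1=-t$, $t_2=t$, $t>0$ the two slices move \emph{apart}, so the ``merge'' event you list cannot occur---the paper instead pushes them toward each other so that merging is one of the terminal events, but your direction works equally well since a slice reaching $\partial[0,1]^n$ also removes a singular point from $(0,1)$.
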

\begin{proof}
    We consider the slices $S = X_{s_1}^i$ and $T = X_{s_2}^i$ corresponding to the singular points $s_1$ and $s_2$, respectively. 
    
    To prove (\ref*{item:same}), assume that $\firstvar(s_1,i) = \firstvar(s_2,i)$.   
    Suppose we perturb $X$ similarly to the variation $X(t)$ defined above, but where $S$ is moved in the direction $e_i$, $T$ is moved in the direction $-e_i$, and this movement is done simultaneously so that the volume of $X$ does not change.
    Then because the first variations at $s_1$ and $s_2$ are equal, the relative perimeter of $X$ does not change during this perturbation.
    To be more precise, if we perturb $S$ a distance of $\sigma$ in the direction $e_i$ and $T$ a distance of $\tau$ in the direction $-e_i$, then the total change in volume is equal to
        $$ \area(S) \sigma - \area(T) \tau $$
    and the change in relative perimeter is
        $$ \firstvar(s_1,i) \area(S) \sigma - \firstvar(s_2,i) \area(T) \tau.$$
    Thus, if we choose $\sigma > 0$ and $\tau > 0$ sufficiently small so that
        $$ \area(S) \sigma = \area(T) \tau, $$
    then the volume does not change, and since $\firstvar(s_1,i) = \firstvar(s_2,i)$, the relative perimeter also does not change.
    Hence, we can continue this perturbation until one of the following occurs:
    \begin{enumerate}
        \item $S$ and $T$ meet,
        \item $S$ meets another singular slice with respect to $i$, or
        \item $T$ meets another singular slice with respect to $i$.
    \end{enumerate}
    At this point the number of singular points in the direction $e_i$ has decreased, and the number of singular points
    in every other direction has not increased.  
    Indeed, suppose that $s$ is a singular point of the new (perturbed) set with respect to some $e_j$, where 
    $j \neq i$.  
    If $s$ was \emph{not} a singular point of the original set, then when we performed the perturbation, we could not have
    created a subset of the boundary with Hausdorff measure at least $n - 1$ in the hyperplane $H_s^j$.  This is because
    the boundary would intersect the hyperplane $H_s^j$ in a set of Hausdorff dimension \emph{at most} $n-2$.
    Thus, $s$ would have had to have been a singular point of the original set, too.  This completes this case.
    
    To prove (\ref*{item:diff}), assume that $\firstvar(s_1,i) < \firstvar(s_2,i)$.
    If we perturb $S$ so that the volume of $X$ increases by
    $\epsilon > 0$ sufficiently small, then the relative perimeter changes by $\epsilon \firstvar(s_1,i)$, and if we perturb $T$ so that the volume decreases by $-\epsilon$, then the relative perimeter changes by $-\epsilon \firstvar(s_2,i)$.
    Then during this perturbation, the total volume stays the same, but the relative perimeter decreases.
    If instead we have $\firstvar(s_1,i) > \firstvar(s_2,i)$, then we simply perturb $S$ so that the volume decreases by $-\epsilon$ and perturb $T$ so that the volume increases by $\epsilon$, and the result follows.
\end{proof}

We can now complete the proof of Proposition~\ref*{prop:reduction}:

\begin{proof}[Proof of Proposition~\ref{prop:reduction}]
    We begin by applying Steiner symmetrizations in the directions $e_1, \dots, e_n$ (in that order).  By Proposition~\ref{prop:steiner},
    the result has the same volume and has no larger relative perimeter.  If, for every $i$, there is at most one singular point in
    $(0,1)$, then we are done.
    
    If not, then there is some $i$ with $s_1 \neq s_2$ singular points in $(0,1)$.
    Thus, we can apply Lemma~\ref{lem:two_singular_same}
    to reduce the total number of singular points without reducing the volume and also without increasing the relative perimeter.
    We continue to apply this lemma until, for every $i$, there is at most one singular point in $(0,1)$ with respect to $i$.
    %
\end{proof}

Finally, we generalize Case (\ref*{item:diff}) of Lemma \ref{lem:two_singular_same} to compare singular points with respect to \textit{different} directions.
This lemma will be useful for identifying minimizers in Section \ref{sec:identify}.

\begin{lem}
    \label{lem:two_singular_different}
    Suppose that $X$ is a symmetrized cubical set of volume $V \in (0,1/2]$.  Suppose that
    $s_1$ is a singular point with respect to $i_1$, and $s_2$ is a singular point with respect to
    $i_2$, where $i_1 \neq i_2$.  
    If $\firstvar(s_1,i_1) \neq \firstvar(s_2,i_2)$,
    then there is some symmetrized cubical subset $\tilde{X} \subset [0,1]^n$ with the same volume
    as $X$, and $\relper(\tilde{X}) < \relper(X)$. 
\end{lem}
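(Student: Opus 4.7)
The strategy is to adapt Case (\ref{item:diff}) of Lemma \ref{lem:two_singular_same} to the setting where the two singular slices lie on perpendicular hyperplanes. Without loss of generality, assume $\firstvar(s_1,i_1) < \firstvar(s_2,i_2)$, and denote the relevant slices by $S = X_{s_1}^{i_1}$ and $T = X_{s_2}^{i_2}$, with areas $A_S = \mathcal{H}^{n-1}(S)$ and $A_T = \mathcal{H}^{n-1}(T)$.

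The first step is to construct a volume-preserving deformation of $X$ that strictly decreases the relative perimeter. The plan is to simultaneously extend $S$ outward in direction $e_{i_1}$ by $\epsilon/A_S$ and retract $T$ inward from direction $e_{i_2}$ by $\epsilon/A_T$, producing the cubical set
\[
  X' = \bigl(X \cup S(\epsilon/A_S)\bigr) \setminus T(\epsilon/A_T).
\]
Applied individually, each single-parameter variation is governed by Proposition \ref{prop:firstvar}, which gives an exact linear change in relative perimeter with slope $\firstvar(s_1,i_1)$ and $\firstvar(s_2,i_2)$ per unit volume, respectively. For $\epsilon > 0$ sufficiently small, the combined operation yields a cubical set with $\vol(X') = \vol(X) + O(\epsilon^2)$ and
\[
  \relper(X') - \relper(X) = \epsilon\bigl(\firstvar(s_1,i_1) - \firstvar(s_2,i_2)\bigr) + O(\epsilon^2).
\]
By our sign assumption the linear term is strictly negative, so the right-hand side is negative for all sufficiently small $\epsilon$. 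A small $O(\epsilon^2)$ adjustment to the retraction parameter (via continuity, or equivalently the inverse function theorem applied to the map $(t_1,t_2)\mapsto(\vol,\relper)$ whose Jacobian has determinant $\firstvar(s_2,i_2)-\firstvar(s_1,i_1)\neq 0$) restores $\vol(X') = \vol(X)$ exactly, while preserving $\relper(X') < \relper(X)$.

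The set $X'$ need not be symmetrized, so the final step is to apply the Steiner symmetrizations $S_{e_1}, \dots, S_{e_n}$ to $X'$ in order. By Proposition \ref{prop:steiner}, the resulting set $\tilde{X}$ is symmetrized, has $\vol(\tilde{X}) = \vol(X') = \vol(X)$, and satisfies $\relper(\tilde{X}) \leq \relper(X') < \relper(X)$, which completes the argument.

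The main technical subtlety, absent in Lemma \ref{lem:two_singular_same}, is the control of the interaction between the two perturbations. Because they occur along perpendicular families of lines, the tubes $S(\epsilon/A_S)$ and $T(\epsilon/A_T)$ overlap in a thin neighborhood of the codimension-two affine subspace $H_{s_1}^{i_1} \cap H_{s_2}^{i_2}$ whose volume is of order $\epsilon^2$. One must check that this overlap contributes only quadratic corrections to both the volume and the relative perimeter — which it does, since the $(n-2)$-dimensional measure of the boundary pieces affected by the overlap is bounded independently of $\epsilon$ — so that the linear term indeed dominates for small $\epsilon$. One must also take $\epsilon$ small enough that neither perturbed slice meets another singular hyperplane of $X$, so that $X'$ remains a cubical subset throughout the deformation.
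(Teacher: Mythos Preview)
Your proposal is correct and follows essentially the same approach as the paper: both identify the key difference from Lemma~\ref{lem:two_singular_same} as the possible intersection of the perpendicular slices, and both handle this by arguing that the interaction contributes only $O(\epsilon^2)$ to the volume and relative perimeter, so that the strictly negative linear term dominates and a second-order correction restores the volume exactly. Your addition of a final round of Steiner symmetrizations (via Proposition~\ref{prop:steiner}) to guarantee that $\tilde{X}$ is symmetrized is a point the paper's proof leaves implicit, so in that respect your write-up is slightly more complete.
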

\begin{proof}
    The proof of this lemma works the same as the proof of Case \ref*{item:diff} in Lemma~\ref*{lem:two_singular_same}.
    The only difference is that the slices $S = X_{s_1}^{i_1}$ and $T = X_{s_2}^{i_2}$ lie in different hyperplanes and can potentially intersect, which cannot happen in Lemma~\ref*{lem:two_singular_same}.  
    
    In this case, we will consider two successive perturbations.
    First, we perturb $S$ in the direction $e_{i_1}$ by some distance $\sigma > 0$.
    For sufficiently small values of $\sigma$, this perturbation increases the volume of $X$ by $\area(S) \sigma$ and changes the relative perimeter by an amount of $\firstvar(s_1,i_1) \area(S) \sigma$.
    The resulting cubical region $X'$ has a singular slice $T'$ with respect to $i_2$ and $s_2$.
    Next, we perturb $T'$ in the direction $-e_{i_2}$ by some distance $\tau$.
    Then there exists a constant $C>0$ such that, for sufficiently small values of $\tau$, this perturbation decreases the volume of $X'$ by at most $-\area(T) \tau - C \tau^2$ and changes the relative perimeter by at most $-\firstvar(s_2,i_2) \area(T) \tau \pm C \tau^2$.

    Now choose $\sigma = \epsilon / \area(S)$ and $\tau = \epsilon / \area(T)$ for some sufficiently small $\epsilon > 0$.
    Then these two perturbations have changed the volume of $X$ in total by at most $C \epsilon^2$ and the relative perimeter by at most 
    \[
        | \firstvar(s_1,i_1) - \firstvar(s_2,i_2) | \epsilon + C \epsilon^2. 
    \]
    
    Thus, since $| \firstvar(s_1,i_1) - \firstvar(s_2,i_2) | > 0$, we can find such perturbations that
    maintain the volume but which reduces relative perimeter.  
    In other words, the perturbation decreases the relative perimeter on the first order with a nonzero coefficient,
    and hence (for small perturbations) the second order term does not matter for the purposes of this argument.
    
    In more detail, we choose $\nu > 0$ 
    and outward normal directions $n_1$ at $H_{s_1,i_1}$ and $n_2$ at $H_{s_2,i_2}$ so that the following.  If we perturb $H_{s_1,i_1}$ in the direction
    $n_1$ so that the the total volume increases by $\nu$, and the relative perimeter changes by $\firstvar(s_1,i_1) \nu$.
    Similarly, if we perturb $H_{s_2,i_2}$ in the
    direction $n_2$, the total volume increases by $\nu$, and the relative perimeter changes by $\firstvar(s_2,i_2) \nu$.  Thus, we can choose signs
    $\sigma_1, \sigma_2 \in \{-1, 1\}$ so that $$ \firstvar(s_1,i_1) \sigma_1 \nu - \firstvar(s_2,i_2) \sigma_2 \nu  < - A \nu$$ for some universal
    positive constant $A$ (as long as $\nu$ is sufficiently small).  This uses the fact that $| \firstvar(s_1,i_1) - \firstvar(s_2,i_2) | > 0$.
    
    If we perform both of these perturbations, then the total relative perimeter changes by at most  $-A \nu + B \nu^2$, while the total area changes by
    at most $D \nu^2$.  Here, $B$ and $D$ are universal positive constants (as long as $\nu$ is small enough).  Thus, if we modify the first perturbation by
    an amount on the order of $\nu^2$, we can correct this error in total volume, while still having a net change in relative perimeter of $-A \nu + B \nu^2 + E \nu^2$, where $E$ is again a universal positive constant (as long as $\nu$ is small enough).  As such, by choosing $\nu$ small enough we can obtain the 
    desired perturbation.
\end{proof}

\section{Identifying minimizers in dimension 3}\label{sec:identify}

To prove Theorem~\ref{thm:main}, we will need to show that certain configurations of special cubical subsets are minimizers for relative perimeter, while the remaining configurations cannot be minimizers.  

Consider the planes $H_\xi^i = \{ \sum x_j e_j \in \RR^3 : x_i = \xi \}$, where $i \in \{1,2,3\}$ and $\xi \in \{0,1\}$.
We begin with some basic observations about the faces $X \cap H_\xi^i$ of any special cubical subset $X \subset [0,1]^3$.  

\begin{lem}\label{lem:boundary}
    Suppose $X$ is a special cubical subset of $[0,1]^3$.
    For any $i$ and $\xi$, the face $X \cap H_\xi^i$ must take one of the following forms:
    \begin{enumerate}
        \item $\varnothing$, in which case $\xi = 1$, \label{item:boundempty}
        \item $[0,a] \times [0,b]$, with $0 < a , b \leq 1$, or \label{item:boundrect}
        \item $([0,a] \times [0,1]) \cup ([0,1] \times [0,b])$, with $0 < a , b < 1$. \label{item:boundL}
    \end{enumerate}
\end{lem}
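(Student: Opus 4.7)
The plan is to analyze the face $F = X \cap H_\xi^i$ by encoding it as a one-dimensional non-increasing step function, and then to use the singular-point constraints in the definition of \emph{special} to narrow it down to three possibilities. Without loss of generality I take $i = 3$, so $F$ sits inside $H_\xi^3 \cong [0,1]^2$ with coordinates $(x_1, x_2)$.

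First I will exploit the symmetrization structure of $X$. Because $X$ is symmetrized in each of the three coordinate directions, each coordinate slice of $X$ (with the remaining coordinates fixed) is an interval starting at $0$, and it follows that $X$ is downward-closed on $[0,1]^3$. Hence $X = \{(x_1, x_2, x_3) \in [0,1]^3 : 0 \le x_1 \le F_1(x_2, x_3)\}$, where $F_1 : [0,1]^2 \to [0,1]$ is the length of the $e_1$-slice of $X$ at $(x_2, x_3)$; downward-closedness forces $F_1$ to be non-increasing in each argument, and the hypothesis that $X$ is cubical makes $F_1$ a step function taking finitely many values. Consequently $F = \{(x_1, x_2) \in [0,1]^2 : x_1 \le g(x_2)\}$, where $g(x_2) := F_1(x_2, \xi)$ is itself a non-increasing step function on $[0,1]$.

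Next I will bound the complexity of $g$ using the singular-point constraints on $X$. Each distinct positive value $v$ of $F_1$ is attained on a plateau $\{F_1 = v\} \subset [0,1]^2$ of positive two-dimensional area, and this plateau contributes a face of $\partial X$ of positive area in the hyperplane $\{x_1 = v\}$; so every such $v \in (0,1)$ is a singular point of $X$ with respect to $e_1$. Since $X$ is special, at most one such $v$ lies in $(0,1)$, so $F_1$ takes at most two distinct positive values, and if it takes exactly two then the larger one must equal $1$. Similarly, each internal jump of $g$ at some $t \in (0,1)$ is witnessed by a vertical segment of a level-set boundary of $F_1$ at $x_2 = t$; because $F_1$ is a step function whose level sets are staircase regions with axis-aligned boundary segments of positive length, this segment has positive $x_3$-extent and hence yields a singular point of $X$ with respect to $e_2$. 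Consequently $g$ has at most one internal jump in $(0,1)$, and therefore takes at most two distinct positive values on $[0,1]$.

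Finally I will conclude by case analysis on $g$. If $g \equiv 0$ then $F = \varnothing$; the open-cube condition rules out $\xi = 0$, since then $g(0) = F_1(0,0) > 0$. If $g$ takes one positive value $w$ on $[0,b]$ and vanishes on $(b,1]$, then $F = [0,w] \times [0,b]$, a rectangle of form (\ref*{item:boundrect}). If $g$ takes two positive values $w > w'$, the $e_1$ constraint forces $w = 1$, the unique internal jump of $g$ sits at some $b \in (0,1)$, and $g$ never reaches $0$ on $[0,1]$; so
\[ F = [0,1] \times [0,b] \cup [0,w'] \times [0,1] \]
is the L-shape of form (\ref*{item:boundL}) with $a = w'$ and $b$ both in $(0,1)$. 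The principal subtlety I anticipate lies in the step identifying each internal jump of $g$ with a genuine singular point of $X$, which requires careful bookkeeping of the level-set structure of $F_1$ but follows from the piecewise-rectangular nature of cubical sets.
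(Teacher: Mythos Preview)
Your proof is correct and takes a genuinely different route from the paper's. The paper argues by contradiction: it simply asserts that if the face $F=X\cap H_\xi^i$ is nonempty and not a rectangle or an L-shape, then one can locate two distinct values $s_1,s_2\in(0,1)$ for which the lines $\{x_j=s_\ell\}$ inside $H_\xi^i$ meet $\partial X$ in $1$-dimensional sets, giving two singular points with respect to $e_j$ (or $e_k$). No encoding of $F$ is introduced, and the combinatorial claim that such $s_1,s_2$ must exist is left to the reader.

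Your argument is more constructive. You exploit the symmetrization explicitly to obtain downward-closedness, represent $F$ as the subgraph of a non-increasing step function $g$, and then bound the complexity of $g$ from two sides: the $e_1$-constraint (at most one singular value in $(0,1)$) limits the \emph{range} of $g$ to at most two positive values with the larger one forced to equal $1$, while the $e_2$-constraint limits the number of \emph{jumps} of $g$ in $(0,1)$ to at most one. The case analysis then falls out cleanly. Your use of the global function $F_1$ on all of $[0,1]^2$ (rather than only the face) to constrain the values of $g$ is a tool the paper does not invoke. The trade-off is that you incur the bookkeeping subtlety you flag at the end---showing that a jump of $g$ really does thicken to a $2$-dimensional face of $\partial X$ in $H_t^2$---whereas the paper avoids this by working directly with $\partial X\cap H_\xi^i$ (a line segment in $\partial X$ of positive length already forces, via the finitely-many-hyperplanes hypothesis, a full $2$-dimensional face in the relevant $H_s^j$). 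Both arguments ultimately rest on the same singular-point restriction, but yours makes the classification of possible shapes completely explicit.
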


\begin{proof}
    Suppose there exist $i$ and $\xi$ such that $X \cap H_\xi^i$ does not have one of the above forms.  
    Notice if $X \cap H_\xi^i$ in empty, then by the definition of special subset, $\xi = 1$; so assume $X \cap H_\xi^i$ is non-empty.
    Let $j,k\in\{1,2,3\}$ such that $i,j,k$ are mutually distinct.
    Then in at least one of the directions $e_j$ or $e_k$, there is more than one
    singular slice, contradicting the assumption that $X$ is special.  
    This is because we can do one of the following:
    
    Case 1: 
    We can find two distinct points $s_1$ and $s_2$ in $(0,1)$ so that the lines $\{ \xi e_i + s_1 e_j + t e_k : t \in [0,1] \}$
    and $\{ \xi e_i + s_2 e_j + t e_k : t \in [0,1] \}$ intersect $\partial X \cap H_\xi^i$ in subsets of Hausdorff dimension at least $1$.
    Then $s_1$ and $s_2$ are two singular points of $X$ with respect to $j$.
    
    Case 2:
    We can find two distinct points $s_1$ and $s_2$ in $(0,1)$ so that the lines $\{ \xi e_i + t e_j + s_1 e_k : t \in [0,1] \}$
    and $\{ \xi e_i + t e_j + s_2 e_k : t \in [0,1] \}$ intersect $\partial X \cap H_\xi^i$ in subsets of Hausdorff dimension at least $1$.
    Then $s_1$ and $s_2$ are two singular points of $X$ with respect to $k$, which is a contradiction.
\end{proof}

Next, we identify relative perimeter minimizers.

\begin{prop}
    Suppose $X$ is a special cubical subset of $[0,1]^3$.
    If $X$ minimizes relative perimeter and the faces $X \cap H_\xi^i$ are of the form (\ref*{item:boundempty}) or (\ref*{item:boundrect}) from Lemma \ref*{lem:boundary} for all $i$ and $\xi$, then $X$ must be of the form described in the conclusion of Theorem~\ref{thm:main}, meaning $X = [0,a]^m \times [0,1]^{3-m}$ for some $a\in(0,1)$ and $m\in\{1,2,3\}$.
\end{prop}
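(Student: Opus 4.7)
The plan is a two-step reduction. First I would show that, under the hypothesis, $X$ must be a rectangular box $[0,L_1]\times[0,L_2]\times[0,L_3]$ with each $L_i\in(0,1]$. Then I would apply Proposition~\ref{prop:firstvar} and Lemma~\ref{lem:two_singular_different} to force equality of the $L_i$ that are strictly less than $1$.

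\emph{Reduction to a box.} Since $X$ is symmetrized in direction $e_3$, I can write $X=\{(x_1,x_2,x_3):x_3\le h(x_1,x_2)\}$ where $h:[0,1]^2\to\{0,s,1\}$ is non-increasing in each argument, with $s\in(0,1)$ possibly not attained (that $h$ takes at most three values uses the fact that $X$ is special). The face $X\cap H_0^3=\{h>0\}$ being a nonempty rectangle gives $\{h>0\}=[0,L_1]\times[0,L_2]$ with $L_1,L_2\in(0,1]$. The face $X\cap H_1^3=\{h=1\}$ is either empty or a rectangle $[0,e]\times[0,f]$. If $\{h=1\}=\varnothing$ or $\{h=1\}=\{h>0\}$, then $h$ is constant on its support and $X$ is a rectangular box. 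Otherwise $\{h=1\}\subsetneq\{h>0\}$, forcing at least one of $e<L_1$ or $f<L_2$ and also forcing $s$ to be attained. If $e<L_1$, then $h(x_1,0)=1$ for $x_1\in[0,e]$ and $h(x_1,0)=s$ for $x_1\in(e,L_1]$, from which
\[
X\cap H_0^2 = [0,e]\times[0,1]\;\cup\;[0,L_1]\times[0,s],
\]
an $L$-shape, contradicting the hypothesis; symmetrically, $f<L_2$ makes $X\cap H_0^1$ an $L$-shape. Hence in every allowed case, $X=[0,L_1]\times[0,L_2]\times[0,L_3]$ for some $L_1,L_2,L_3\in(0,1]$.

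\emph{First-variation equalities.} Since $V\le 1/2$, not all $L_i$ equal $1$. For each $i$ with $L_i<1$, the slice $S=X\cap H_{L_i}^i$ is a rectangle of area $A=L_jL_k$; of its four boundary edges, the two lying in the cube faces $\{x_j=0\}$ and $\{x_k=0\}$ contribute $\psi=0$, and the remaining two contribute $\psi=+1$ precisely when $L_j<1$ and $L_k<1$ respectively. Proposition~\ref{prop:firstvar} therefore yields
\[
\firstvar(L_i,i)=\frac{[L_j<1]}{L_j}+\frac{[L_k<1]}{L_k}.
\]
If exactly one $L_i<1$, then $X=[0,L_i]\times[0,1]^2$, a slab of the desired form. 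If two or more of the $L_i$ are strictly less than $1$, then the minimality of $X$ together with Lemma~\ref{lem:two_singular_different} forces all first variations at different-direction singular slices to agree, and the displayed formula immediately yields $L_i=L_j$ whenever $L_i,L_j<1$. Setting $a$ equal to this common value and $m$ equal to the number of indices with $L_i<1$, we obtain $X=[0,a]^m\times[0,1]^{3-m}$.

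\emph{Main obstacle.} The nontrivial step is the structural reduction to a rectangular box: checking that any staircase more complex than a single box produces an $L$-shaped boundary face, contradicting the hypothesis. Once $X$ is known to be a box, the remainder is a mechanical application of the first-variation tools developed in Section~\ref{sec:reduction}.
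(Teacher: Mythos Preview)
Your proposal is correct and follows essentially the same approach as the paper. The paper simply asserts that under the hypotheses $X$ must be a box $[0,a]\times[0,b]\times[0,c]$, $[0,a]\times[0,b]\times[0,1]$, or $[0,a]\times[0,1]^2$, and then equates first variations via Lemma~\ref{lem:two_singular_different}; you supply a detailed justification for the box reduction (via the height function $h$ and the observation that a nonconstant staircase forces an $L$-shaped face) and then carry out the same first-variation computation, packaged with Iverson brackets.

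One small terminological point: when $e<L_1<1$, the face $X\cap H_0^2=[0,e]\times[0,1]\cup[0,L_1]\times[0,s]$ is not literally the $L$-shape of Lemma~\ref{lem:boundary}(\ref{item:boundL}) (which requires the second rectangle to reach all the way to $1$). This does not harm your argument, since the set is in any case neither empty nor a rectangle, contradicting the hypothesis that every face has form (\ref{item:boundempty}) or (\ref{item:boundrect}); but you may wish to phrase the conclusion as ``not a rectangle'' rather than ``an $L$-shape.''
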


\begin{proof}
    Under these hypotheses, $X$ must be of one of the following types:
    
    \begin{enumerate}[(i)]
        \item $[0,a] \times [0,b] \times [0,c]$ with $0 < a,b,c < 1$. \label{item:abcprism}
        \item $[0,a] \times [0,b] \times [0,1]$ with $0 < a,b < 1$.\label{item:abtube}
        \item $[0,a] \times [0,1]^2$ with $0 < a \leq 1$.\label{item:aslab}
    \end{enumerate}
    
    If $X$ is to be a relative perimeter minimizer, then the first variations of any singular slices must be equal by Lemma~\ref{lem:two_singular_different}.
    In case (\ref*{item:abcprism}), setting the first variations of the three singular slices equal gives
    \[
       \frac{a + b}{ab} =  \frac{b + c}{bc} = \frac{a + c}{ac} \implies a = b = c.
    \]
    Thus $X = [0,a]^3$, as desired.
    In case (\ref*{item:abtube}), we have 
    \[
        \frac{1}{a} = \frac{1}{b} \implies a = b.
    \]
    Thus $X = [0,a]^2 \times [0,1]$, as desired.
    Finally for case (\ref*{item:aslab}), $X$ is already of a desired form.
\end{proof}

%
%
%
%

Finally to complete the proof of Theorem \ref{thm:main}, we show that the remaining special subsets cannot be relative perimeter minimizers.

\begin{prop}
    Suppose $X$ is a special cubical subset of $[0,1]^3$.
    If there exists $i$ and $\xi$ such that the face $X \cap H_{\xi}^{i}$ is of the form (\ref*{item:boundL}) from Lemma \ref*{lem:boundary}, then $X$ is not a minimizer for relative perimeter.
\end{prop}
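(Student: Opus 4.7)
The plan is to exhibit, for every special cubical subset $X$ with an L-shaped face, a cubical competitor of the same volume with strictly smaller relative perimeter. After applying an isometry of $[0,1]^3$, I may assume the L-shaped face is $X \cap H_0^3 = L := ([0,a]\times[0,1]) \cup ([0,1]\times[0,b])$ with $0 < a, b < 1$. Because $X$ is symmetrized and special, the inner wall of $L$ at $x_1 = a$ forces $a$ to be the unique singular value in $(0,1)$ with respect to $e_1$; symmetrically for $b$ and $e_2$. A short case analysis using the ``at most one singular point in $(0,1)$'' constraint identifies $X$ as one of a small catalog of shapes: the full prism $L\times[0,1]$, the truncated prism $L\times[0,\gamma]$ for some $\gamma \in (0,1)$, or a stepped prism $(L\times[0,\gamma])\cup(L'\times[\gamma,1])$, where $L'$ must be one of the rectangles $[0,a]\times[0,b]$, $[0,a]\times[0,1]$, or $[0,1]\times[0,b]$ so that no new singular points are introduced.

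For each catalog shape I compute $\firstvar(a,1)$ and $\firstvar(b,2)$ via Proposition~\ref{prop:firstvar}. A direct calculation yields a factorization of the form
\[
\firstvar(a,1) - \firstvar(b,2) = (a-b)\cdot Q(a,b,\gamma),
\]
where $Q$ is strictly positive on the admissible parameter range (for example $Q = 1/[(1-a)(1-b)]$ in the prism and truncated-prism cases). Consequently, whenever $a \neq b$, Lemma~\ref{lem:two_singular_different} immediately produces a competitor with strictly smaller relative perimeter, and the proposition is proved. Hence I may assume $a = b$.

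In the symmetric case $a = b$ I compare $X$ directly with a suitable canonical candidate $[0,c]^m \times [0,1]^{3-m}$. For the full prism $X = L\times[0,1]$, we have $\vol(X) = a(2-a)$ and $\relper(X) = 2(1-a)$; the volume constraint $V \leq 1/2$ forces $a \leq 1 - 1/\sqrt{2}$, giving $\relper(X) \geq \sqrt{2} > 1 = \relper([0,V]\times[0,1]^2)$, so the slab beats $X$. In the truncated case $X = L\times[0,\gamma]$, the natural competitor is the box $[0,a]\times[0,1]\times[0,V/a]$ (valid because $V/a \leq 2V \leq 1$ when $a \geq 1/2$, otherwise use the tube $[0,\sqrt{V}]^2\times[0,1]$), and an explicit calculation gives $\relper(X) - \relper(\text{box}) = a(a-\gamma)$, which is positive whenever $a > \gamma$; one verifies that $a > \gamma$ is always satisfied at the critical points of the first-variation system. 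The stepped prism cases are handled analogously, with the candidate chosen among slab, tube, or cube depending on the range of $V$.

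The main obstacle is this final direct-comparison step: once $a = b$ is forced, the exchange lemmas no longer apply, and we must track each of the five catalog shapes and select the best canonical competitor, verifying strict inequality via explicit polynomial identities in $(a,\gamma)$. The conceptual difficulty is low but the bookkeeping is delicate because the optimal canonical competitor transitions between cube, tube, and slab as $V$ ranges over $(0, 1/2]$. The argument succeeds because the L-shape's inner perimeter $2(1-a)$ is bounded below by $\sqrt{2}$ throughout the admissible regime $V \leq 1/2$, providing the margin needed to beat whichever of $\{$cube, tube, slab$\}$ minimizes at the given volume.
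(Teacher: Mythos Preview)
Your case analysis has a genuine gap: it misses the ``three intersecting slabs'' configuration
\[
X = ([0,a]\times[0,1]^2)\cup([0,1]\times[0,b]\times[0,1])\cup([0,1]^2\times[0,c]),
\]
which is exactly Case~(a) in the paper. Here every face $X\cap H_0^i$ is the full unit square (hence of rectangular type), while every face $X\cap H_1^i$ is L-shaped. Your opening reduction ``after an isometry I may assume the L-shaped face is $X\cap H_0^3$'' fails for this shape: the isometries of $[0,1]^3$ that preserve the symmetrized/special structure must fix the origin, so they are just coordinate permutations, and no permutation turns a full square at $H_0^i$ into an L. If instead you use a reflection $x_i\mapsto 1-x_i$ to move an L-face from $H_1$ to $H_0$, the image is no longer symmetrized and your subsequent ``at most one singular point'' catalog argument collapses. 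So the three-slab body never enters your catalog of prisms and stepped prisms, and the proof is incomplete. The paper handles this case separately by equating first variations (forcing $a=b=c$), bounding $a\le 3/10$ from the volume constraint, and then exhibiting an explicit replacement.

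Beyond the missing case, the direct-comparison portion is not just delicate bookkeeping; at least one of your stated identities is incorrect. For the truncated prism $X=L\times[0,\gamma]$ with $a=b$ and competitor $[0,a]\times[0,1]\times[0,V/a]$, a direct computation gives
\[
\relper(X)-\relper(\text{box})
= \bigl(2(1-a)\gamma + a(2-a)\bigr)-\bigl(\gamma(2-a)+a\bigr)
= a(1-a-\gamma),
\]
not $a(a-\gamma)$; positivity requires $a+\gamma<1$, which is a different condition from the one you assert. The paper sidesteps all of this in Case~(b) by reducing to the already-solved two-dimensional problem: since $\relper(X)=c\,\relper_{[0,1]^2}(L)+\vol_2(L)$ (or just $c\,\relper(L)$ when $c=1$), replacing $L$ by the 2D minimizer of the same area strictly lowers $\relper(X)$.
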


\begin{proof}
    Assume $X$ satisfies the hypotheses above.
    We have the following cases to consider for the faces $X \cap H_\xi^i$ where $\xi = 0$ and $i\in\{1,2,3\}$:
    \begin{enumerate}[(a)]
        \item All three faces $X \cap H_0^1$, $X \cap H_0^2$, $X \cap H_0^3$ are of the form (\ref*{item:boundrect}).\label{item:03}
        \item One of the faces is (\ref*{item:boundL}), while the other two are (\ref*{item:boundrect}).\label{item:12}
        \item Two of the faces are of the form (\ref*{item:boundL}), while the third is (\ref*{item:boundrect}).\label{item:21}
        \item All three faces  are of the form (\ref*{item:boundL}).\label{item:30}
    \end{enumerate}
    Each of these cases is illustrated in Figure~\ref*{fig:candidates}.

    \begin{figure}[h]
        \includegraphics[width=\textwidth]{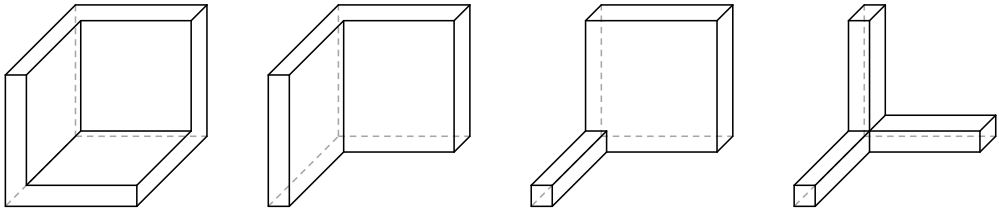}
        \caption{Special cubical subsets that are not minimizers of relative perimeter.}
        \label{fig:candidates}
    \end{figure}
    
    In Case (\ref*{item:03}), by assumption, at least one of the faces $X \cap H_1^1$, $X \cap H_1^2$, $X \cap H_1^3$ is of the form (\ref*{item:boundL}).
    It then follows that,
    \[
        X = ([0,a] \times [0,1]^2) \cup ([0,1] \times [0,b] \times [0,1]) \cup ([0,1]^2 \times [0,c])
    \]
    for some $a,b,c\in(0,1)$.
    If $X$ is to be a relative perimeter minimizer, then the first variations of any singular slices must be equal by Lemma~\ref{lem:two_singular_different}.
    Hence, calculating the first variation of the three singular slices, we have:
    \[
        \frac{a + b - 2}{(1 - a)(1 - b)} = \frac{a + c - 2}{(1 - a)(1 - c)} = \frac{b + c - 2}{(1 - b)(1 - c)}.
    \]
    Thus, we obtain $a = b = c$.  
    Since the volume must be $\leq 1/2$, 
    \[
        2 a^3 - 6a^2 + 6a - 1 \leq 0.
    \]
    From this, we conclude that $0 \leq a \leq 3/10$. 
    If we delete two of the faces and replace it by a single slab, the area goes down
    by $2(1-a)^2$ and goes up by $2a - a^2$, so the total change is
    \[
        2a - a^2 - 2(1-a)^2 = -3a^2 +6a - 2.
    \]
    On $[0,3/10]$, this quadratic is negative, and so doing this replacement reduces the total relative perimeter while maintaining
    volume.
    Thus, $X$ cannot be a minimizer in this Case (\ref*{item:03}).
        
    In Case (\ref*{item:12}), it follows that up to isometries of $[0,1]^3$,
    \[
        X =  \big( ([0,a] \times [0,1]) \cup ([0,1] \times [0,b]) \big) \times [0,c]
    \]
    for some $a,b\in(0,1)$ and $c \in (0,1]$.
    Let $Y = ([0,a] \times [0,1]) \cup ([0,1] \times [0,b])$, and let $\vol(Y)$ be the $2$-dimensional volume of $Y$,
    and $\relper(Y)$ be the $1$-dimensional relative perimeter of $Y$ as a subset of $[0,1]^2$.  If $c < 1$,
    then
        $$ \relper(X) = c \relper(Y) + \vol(Y), $$
    and if $c = 1$, then
        $$ \relper(X) = c \relper(Y). $$
    Furthermore, in both cases,
        $$ \vol(X) = c \vol(Y). $$
    As discussed in the introduction, we have a complete characterization for this problem in $2$-dimensions.  If we replace $Y$ with
    $Y'$ so that $\relper(Y') < \relper(Y)$ and $\vol(Y') = \vol(Y)$, then keeping $c$ the same,
    we will have produced a better competitor for $X$ (regardless of whether $c = 1$ or $c < 1$).
    Since the minimizers for $Y$ are squares of the form $[0,a]^2$ or slabs of the form $[0,a] \times [0,1]$ up to
    isometries of $[0,1]^2$, since $Y$ in this case is not of one of these forms, we can indeed find a better competitor,
    completing this case.
    
    In Case (\ref*{item:21}), it follows that up to isometries of $[0,1]^3$, 
    \[
        X = ([0,a] \times [0,1]^2) \cup ([0,1] \times [0,b] \times [0,c])
    \] 
    for some $a,b,c\in(0,1)$.
    Equating the first variations of the three singular slices gives
    \[
        \frac{-b - c}{1 - bc}=\frac{1 - a - c}{(1 - a)c}=\frac{1 - a - b}{(1 - a)b}.
    \]
    By the second equality, $b = c$.  
    Since the total volume must be $\leq 1/2$, $a \leq 1/2$.  
    Replacing the $b$-$c$
    leg with just a slab of the correct volume, the relative area increases by $b^2$ but decreases by $2b(1 - a) > b$.
    Since $b < 1$, $b^2 < b$, and so the total relative perimeter goes down.
    
    As an alternative argument, since these quantities are all equal,
    \[
        \frac{-2b}{1-b^2} = \frac{1-a-b}{(1-a)b},
    \]
    $0 \leq b \leq 1$, and so we can show from this equality that $a > 1/2$, which is a contradiction.
    
    In Case (\ref*{item:30}), it follows that up to isometries of $[0,1]^3$, 
    \[
        X = ([0,a] \times [0,1] \times [0,c]) \cup ([0,1] \times [0,b] \times [0,c]) \cup ([0,a] \times [0,b] \times [0,1])
    \]
    for some $a,b,c\in(0,1)$.
    If $X$ is to be a relative perimeter minimizer, then the first variations of any singular slices must be equal by Lemma~\ref{lem:two_singular_different}.
    Hence
    \[
        \frac{1 - a - c}{(1 - a)c} = \frac{1 - a - b}{(1 - a)b} = \frac{1 - b - a}{(1 - b)a} = \frac{1 - b - c}{(1 - b)c} = \frac{1 - c - a}{(1 - c)a} = \frac{1 - c - b}{(1 - c)b}.
    \]
    It follows that $a = b = c$.  Since the total volume must be at most $1/2$, $a \leq 1/2$.  Thus, if we cut off one of the three ``legs,"
    that is, remove $[0,a]^2\times[a,1]$,
    rotate it $\pi/2$ radians, and then glue it to the other face, that is, union with $[a,2a]\times[a,1]\times[0,a]$, the
    volume is preserved and the perimeter is reduced from $6a(1-a)$ to $5a(1-a)$.
    Thus $X$ is not a minimizer in this case.
\end{proof}

\section{Uniqueness of minimizers in dimension 3}\label{sec:unique}

In this section, we discuss the uniqueness of the minimizers we have identified in the previous section.  
First, we note that any minimizer can be modified via
one of the isometries of $[0,1]^3$.  
Thus, all of our statements about uniqueness will be up to actions by isometry group of $[0,1]^3$.

Next, there are volumes $0 < V_1 < V_2 < 1$ such that the relative perimeter of a cube $[0,a]^3$ of volume $V_1$ is equal to the relative perimeter of a
tube $[0,b]^2 \times [0,1]$ of volume $V_1$, and the relative volume of a tube of volume $V_2$ is equal to that of a slab $[0,c]\times[0,1]^2$ of volume $V_2$.  
We calculate $V_1$ and $V_2$ here.
For the volume $V_1$, the relative perimeter of the cube is $3 V_1^{2/3}$, and the relative perimeter of the tube is $2 V_1^{1/2}$. Thus, $V_1 = ( \frac{2}{3} )^6$.
For $V_2$, the relative perimeter of the tube is $2 V_2^{1/2}$, and that of the slab is $1$.  Therefore, $V_2 = \frac{1}{4}$.

We can now state our main uniqueness result:
\begin{prop}
    \label{prop:uniqueness}
    Define $V_1$ and $V_2$ as above.
    Considering cubical subsets of $[0,1]^3$ of volume $V \in (0,1/2]$ that are minimizers of relative perimeter, the following are true up to isometries of $[0,1]^3$ or sets of measure zero:
    \begin{enumerate}
        \item If $V < V_1$, then the only minimizer is the cube of volume $V$.
        \item If $V = V_1$, then the only minimizers are the cube and the tube of volume $V_1$.
        \item If $V_1 < V < V_2$, then the only minimizer is the tube of volume $V$.
        \item If $V = V_2$, then the only minimizers are the tube and the slab of volume $V_2$.
        \item If $V > V_2$, then the only minimizer is the slab of volume $V$.
    \end{enumerate}
    Here, the cube of volume $V$ refers to $\big[0,V^{1/3}\big]^3$, the tube of volume $V$ is $\big[0,V^{1/2}\big]^2 \times [0,1]$, and the slab of volume $V$ is $[0,V] \times [0,1]^2$.
\end{prop}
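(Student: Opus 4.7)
The plan is to invoke Theorem~\ref{thm:main} to cut the problem down to comparing three explicit candidates. By that theorem, any minimizer is (up to isometry and measure zero) of the form $[0,a]^m \times [0,1]^{3-m}$ with $a \in [0, 1/2]$, so for a fixed volume $V \in (0, 1/2]$ the minimizer must be one of the following: the cube $[0, V^{1/3}]^3$, which is a valid candidate only when $V \leq 1/8$; the tube $[0, V^{1/2}]^2 \times [0,1]$, valid only when $V \leq 1/4$; or the slab $[0, V] \times [0,1]^2$, valid for all $V \in (0, 1/2]$. Uniqueness therefore reduces to identifying, for each $V$, which of at most three explicit numbers is smallest.

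First I would record the three relative perimeters---$P_{\mathrm{cube}}(V) = 3 V^{2/3}$, $P_{\mathrm{tube}}(V) = 2 V^{1/2}$, and $P_{\mathrm{slab}}(V) = 1$---each obtained by noting which of the six faces of the prism lie on $\partial [0,1]^3$ and summing the remaining face areas. Next I would carry out the two pairwise comparisons: solving $P_{\mathrm{cube}}(V) = P_{\mathrm{tube}}(V)$ yields $V^{1/6} = 2/3$, i.e.\ the threshold $V_1 = (2/3)^6$, with $P_{\mathrm{cube}} < P_{\mathrm{tube}}$ iff $V < V_1$; solving $P_{\mathrm{tube}}(V) = P_{\mathrm{slab}}(V)$ yields $V_2 = 1/4$, with $P_{\mathrm{tube}} < P_{\mathrm{slab}}$ iff $V < V_2$. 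Because the cube is a valid candidate only on $V \leq 1/8 < V_2$, transitivity gives $P_{\mathrm{cube}} \leq P_{\mathrm{tube}} < P_{\mathrm{slab}}$ throughout that range, so the cube--slab comparison need not be done separately.

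Finally I would assemble the five cases by combining the two threshold comparisons with the validity ranges: for $V < V_1$ the inequalities read cube $<$ tube $<$ slab, so the cube is the unique minimizer; at $V = V_1$ the cube ties the tube and both strictly beat the slab; on $(V_1, V_2)$ the tube strictly beats every other defined candidate; at $V = V_2 = 1/4$ the cube is no longer a candidate (since $V_2 > 1/8$) and the tube ties the slab; for $V > V_2$ neither the cube nor the tube is a candidate (since $V^{1/2} > 1/2$), so only the slab survives. There is no real obstacle here given Theorem~\ref{thm:main}; the only care required is the bookkeeping about when each shape is a valid candidate (i.e.\ when its parameter $a$ lies in $[0, 1/2]$), and this is precisely what forces the slab to be the unique minimizer on $(V_2, 1/2]$.
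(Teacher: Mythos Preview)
Your argument is circular. In this paper, Proposition~\ref{prop:uniqueness} \emph{is} the uniqueness half of Theorem~\ref{thm:main}: the introduction says explicitly that the proof of Theorem~\ref{thm:main} is spread over Sections~\ref{sec:reduction}--\ref{sec:unique}, with Section~\ref{sec:unique} supplying the statement ``the minimizers identified in Section~\ref{sec:identify} are indeed the \emph{unique} minimizers of relative perimeter among all cubical subsets.'' So you cannot invoke Theorem~\ref{thm:main} as an input. What Sections~\ref{sec:reduction}--\ref{sec:identify} actually give you is weaker: Proposition~\ref{prop:reduction} says every cubical set can be replaced by a special one with the same volume and no larger relative perimeter, and Section~\ref{sec:identify} says that among special sets the only minimizers are cubes, tubes, and slabs. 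Together these pin down the isoperimetric \emph{value} $\min(3V^{2/3},\,2V^{1/2},\,1)$, but they do \emph{not} rule out a non-special cubical set that happens to achieve this value. The inequality in Proposition~\ref{prop:reduction} need not be strict, and case~(\ref{item:same}) of Lemma~\ref{lem:two_singular_same} only says you can merge two equal-first-variation slices while \emph{preserving} perimeter.

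The paper's proof fills exactly this gap. Starting from an arbitrary minimizer $X$, it uses the equality case of Steiner symmetrization (Proposition~\ref{prop:steiner}(\ref{item:relper_same})) to assume $X$ is symmetrized, then iterates Lemma~\ref{lem:two_singular_same} to reach a cube, tube, or slab $X^*$ with the same perimeter. The real work is analyzing the \emph{last} such merge: if $\tilde{X}$ has two singular slices $S,T$ in direction $e_1$ that merge into a face of $X^*$, one computes the two first variations in terms of $\relper(T)$ and $\area(T)$, sets them equal, and derives a contradiction with the $2$-dimensional relative isoperimetric inequality on that face (separately for the cube, tube, and slab cases). Your comparison of $3V^{2/3}$, $2V^{1/2}$, and $1$ is correct and is needed at the end, but it is the easy bookkeeping; the substantive content of the proposition is the argument you skipped.
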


\begin{proof}
    We begin with a cubical set $X$ of volume $V \in (0,1/2]$ which is 
    a minimizer of relative perimeter. 
    By Proposition \ref{prop:steiner}, we may assume that $X$ has been Steiner symmetrized in the three coordinate directions.
    
    Now if $X$ is special, then $X$ must be a cube, tube, or slab by our arguments in Section \ref{sec:identify}.
    Otherwise, $X$ is not special, and because its volume is positive, it follows that there must exist a direction $e_i$ with respect to which $X$ has two or more singular points, meaning $X$ is not a cube, tube, or slab.
    By Lemma \ref{lem:two_singular_same}, given a direction $i$ that has multiple singular slices, these slices must have the same first variation.
    Then by iteratively applying the variation described in the proof of Lemma \ref{lem:two_singular_same} on the singular slices of $X$, the number of singular slices can be reduced until the resulting special cubical region $X^*$ has the same volume and relative perimeter as $X$ and is either a cube, tube, or slab by the results in Section \ref{sec:identify}.
    Consider the last iteration of this process, taking a symmetrized cubical subset $\tilde{X}$ and perturbing it until we arrive at $X^*$.
    $\tilde{X}$ has the same volume and relative perimeter as $X^*$, it has a direction (without loss of generality, say it is $e_1$) with respect to which there is exactly two singular slices, $S = \tilde{X}_{s_1}^1$ and $T = \tilde{X}_{s_2}^1$, corresponding to singular points $s_1 < s_2$ in the direction $e_1$.
    These slices have the same first variation, and the perturbation moves $S$ in the direction $e_1$ and $T$ n the direction $-e_1$ until they lie in the same plane.
    We consider the three cases: $X^*$ is a cube, tube, or slab.
    
    
    Suppose $X^*$ is a cube $[0,a]^3$.  
    Considering $T$ as a subset of the face $A = \{a\} \times [0,a]^2$, we will calculate first variations.  
    The first variation of the portion $T$ is
    $$ \frac{\relper(T)}{\area(T)}, $$
    and the first variation of $S$ is
    $$ \frac{2a - \relper(T)}{a^2 - \area(T)}. $$
    Because the first variations must be equal, it follows that
    \begin{equation}\label{eq:cube1stvar}
        \frac{\relper(T)}{\area(T)} = \frac{2}{a}.
    \end{equation}
    On the other hand, we observe that $\relper(T) \geq 2 \sqrt{\area(T)}$ from the $2$-dimensional cubical relative isoperimetric problem which we discussed
    in the introduction.  
    In addition, since $T$ is a strict subset of $[0,a]^2$, $\area(T) < a^2$.  
    Thus, it follows that
    \[
        \frac{\relper(T)}{\area(T)} \geq \frac{2}{\sqrt{\area(T)}} > \frac{2}{a},
    \]
    which contradicts (\ref*{eq:cube1stvar}).
    This completes the case of the cube.
    
    The next case is a tube $[0,a]^2 \times [0,1]$.
    Again, consider $T$ as a subset of the face $A = \{a\} \times [0,a] \times [0,1]$
    As before, the first variation of the $T$ is
    $$ \frac{\relper(T)}{\area(T)}, $$
    and the first variation of $S$ is
    $$ \frac{1 - \relper(T)} {a - \area(T)}. $$
    Again, these must be equal, and so we have
    \begin{equation}\label{eq:tube1stvar}
        \frac{\relper(T)}{\area(T)} = \frac{1}{a}.
    \end{equation}
    Since $T$ is a proper subset of the face, $\area(T) < a$.  
    In addition, from the solution to the $2$-dimensional problem for the face
    $A = \{a\} \times [0,a] \times [0,1]$ (see Remark \ref{rem:strip} below), we have the following:  
    If $\area(T) \leq a^2$, then $\relper(T) \geq 2\sqrt{\area(T)}$.
    Then
        $$ \frac{\relper(T)}{\area(T)} \geq \frac{2}{\sqrt{\area(T)}} > \frac{1}{a} $$
    which is a contradiction.  
    If $\area(T) > a^2$, then $\relper(T) \geq \min\{1,a + \frac{\area(T)}{a}\}$.  
    If $\relper(T)\geq 1$, then we have 
        $$ \frac{\relper(T)}{\area(T)} \geq \frac{1}{\area(T)} \geq \frac{1}{a}.  $$
    Since these all must be equalities by (\ref*{eq:tube1stvar}), we have $\area(T) = a$, which contradicts the fact that $T$ is a proper subset of $A$.  
    If instead $\relper(T) \geq a + \frac{\area(T)}{a}$, we have
        $$ \frac{\relper(T)}{\area(T)} \geq \frac{a}{\area(T)} + \frac{1}{a} > \frac{1}{a}.$$
    However, this contradicts (\ref*{eq:tube1stvar}).
    This completes the tube component of the proof.

    Finally, the last case is a slab $[0,a] \times [0,1]^2$.
    Consider $T$ as a proper subset of the face $A = \{a\} \times [0,1]^2$.  
    The first variation of the component corresponding to $T$ is then positive, but the first variation of $S$ is negative.  
    Since they have to be equal, we arrive at a contradiction, thus completing the proof.
\end{proof}

\begin{rem}\label{rem:strip}
    We can extend the technique discussed in Remark \ref{rem:dim2} to resolve the relative isoperimetric problem for cubical subsets of $[0,1]^2$ while restricting the sets to lie in $[0,a] \times [0,1]$ for some fixed $a \in(0,1)$. 
    Here, if the area $V$ of the minimizer is at most $a^2$, then the minimizer (up to isometries) is $\big[0,V^{1/2}\big]^2$.  
    If the area $V$ is greater than $a^2$, then the minimizer is either a strip of the form $[0,V] \times [0,1]$
    or a rectangle of the form $[0,a] \times [0,\frac{V}{a}]$.  
    In particular, this implies that the minimal relative perimeter is $\min\left( 1, a + \frac{V}{a} \right)$.
\end{rem}

\bibliographystyle{amsplain}
\bibliography{bibliography}

\end{document}